\def\sqr#1#2{{\vcenter{\vbox{\hrule height.#2pt
        \hbox{\vrule width.#2pt height#1pt \kern#2pt
        \vrule width.#2pt}
        \hrule height.#2pt}}}}
\DeclareMathOperator*{\argmin}{arg\,min}
\newcommand{\nc}{\newcommand}
\nc{\parent}[1]{$[\![#1]\!]$}
\newtheorem{theorem}{Theorem}[section]
\newtheorem{example}{Example}[section]
\newtheorem{corollary}{Corollary}[section]
\newtheorem{proposition}{Proposition}[section]
\newtheorem{remark}{Remark}
\nc{\sD}{{\mathscr D}} \nc{\sE}{{\mathscr E}}
\nc{\cadlag}{c\`{a}dl\`{a}g } \nc{\ba}{\begin{array}}
\nc{\ea}{\end{array}} \nc{\be}{\begin{equation}}
\nc{\ee}{\end{equation}} \nc{\bea}{\begin{eqnarray}}
\nc{\eea}{\end{eqnarray}} \nc{\bean}{\begin{eqnarray*}}
\nc{\eean}{\end{eqnarray*}} \nc{\bu}{\bullet} \nc{\nn}{\nonumber}
\nc{\cA}{{\mathcal A}} \nc{\cB}{{\mathcal B}} \nc{\cC}{{\mathcal C}} \nc{\bfE}{\mathbf{E}}
\nc{\cD}{{\mathcal D}} \nc{\bbD}{\mathbb{D}}\nc{\bbH}{\mathbb{H}}
\nc{\bbF}{\mathbb{F}}\nc{\bbG}{\mathbb{G}}\nc{\cG}{{\mathcal G}} \nc{\cF}{{\mathcal F}}
\nc{\cS}{{\mathcal S}} \nc{\cU}{{\mathcal U}} \nc{\cH}{{\mathcal H}}\nc{\cJ}{{\mathcal J}}
\nc{\cK}{{\mathcal K}} \nc{\cL}{{\mathcal L}} \nc{\cM}{{\mathcal M}}
\nc{\cO}{{\mathcal O}} \nc{\cP}{{\mathcal P}} \nc{\bbE}{\mathbb{E}}
\nc{\bbEQ}{\mathbb{E}_{\mathbb{Q}}} \nc{\eps}{\varepsilon}
\nc{\bbEP}{\mathbb{E}_{\mathbb{P}}}\nc{\bbL}{\mathbb{L}}
\nc{\bbP}{\mathbb{P}} \nc{\bbQ}{\mathbb{Q}} \nc{\del}{\partial}
\nc{\Om}{\Omega} \nc{\om}{\omega} \nc{\bbR}{\mathbb{R}}
\nc{\bbC}{\mathbb{C}} \nc{\bfr}{\begin{flushright}}
\nc{\efr}{\end{flushright}} \nc{\dXt}{\Delta X_{t}} \nc{\dXs}{\Delta
X_{s}} \nc{\bs}{\blacksquare} \nc{\dX}{\Delta X} \nc{\dY}{\Delta Y}
\nc{\dnkx}{\left(X(T^{n}_{k})-X(T^{n}_{k-1})\right)}
\nc{\esssup}{\mathrm{ess}\mbox{ }\mathrm{sup}}
\nc{\essinf}{\mathrm{ess}\mbox{ } \mathrm{inf}}
\nc{\dhats}{\widehat{\delta_s}} \nc{\tX}{\tilde{X}}
\nc{\tZ}{\tilde{Z}}
\nc{\what}{\widehat}
 \nc{\half}{\frac{1}{2}}
\def\rar{\rightarrow} \nc{\uar}{\uparrow}
\nc{\chf}{\mbox{$\mathbf1$}} \nc{\eid}{\stackrel{d}{=}}
\begin{document}
\title{Linear inverse problems for Markov processes and their regularisation}
\author{Umut \c{C}etin}
\address{Department of Statistics, London School of Economics and Political Science, 10 Houghton st, London, WC2A 2AE, UK}
\email{u.cetin@lse.ac.uk}
\date{\today}
\begin{abstract}
We study the solutions of the inverse problem
\[
g(z)=\int f(y) P_T(z,dy) 
\]
for a given $g$, where $(P_t(\cdot,\cdot))_{t \geq 0}$ is the transition function of a given symmetric Markov process, $X$, and $T$ is a fixed deterministic time, which is linked to the solutions of the ill-posed Cauchy problem
\[
u_t + A u=0, \qquad u(0,\cdot)=g,
\]
where $A$ is the generator of $X$. A necessary and sufficient condition ensuring square integrable solutions is given. Moreover, a family of regularisations for  above  problems is suggested. We show in particular that these inverse problems have a solution when $X$ is replaced by  $\xi X + (1-\xi)J$, where $\xi$ is a Bernoulli random variable, whose probability of success can be chosen arbitrarily close to $1$, and $J$ is a suitably constructed jump process.
\end{abstract}
\maketitle

\section{Introduction}
Suppose that $X$ is a Markov process taking values in some  topological space, $\bfE$,  and let $(P_t)_{t \geq 0}$ be a strongly continuous semigroup describing the movement of $X$ in its state space through time.  Let us consider the following integral equation 
\be \label{e:IP}
g(z)=\int f(y) P_T(z,dy) 
\ee
for a given $g$ and a fixed -deterministic- $T \geq 0$.  Put differently, the above can be viewed as recovering an input signal, $f$, from a blurred output, $g$, which is corrupted by some noise described by the kernel $P_T$.   This is an inverse problem which is present in many fields of science and technology. In image processing solving this inverse problem corresponds to  the reconstruction of an image from the available data as in, e.g., {\em tomography} (see \cite{imaging}). In statistics one is often interested in estimating the density function, $f$, of a certain random variable using the observations of a related one with density $g$, which is linked by some kernel $K$  to the original density via the equation $g= Kf$.  Vardi and Lee \cite{VL} show that such inverse problems can be interpreted as a statistical estimation problem from an incomplete data if it admits a positive solution. Under the assumption of the existence of a positive solution to $g=Kf$ they develop a maximum likelihood (ML) algorithm  to solve the estimation issue and apply their methodology to problems arising from optimal investment, emission tomography, and image reconstruction due to motion blurring. More recent works on the interplay between ML estimators and inverse problems with positivity constraints include \cite{Silvermanetal}, \cite{eggL}, and \cite{KooChung}, and \cite{Cavalier} is an excellent introduction to inverse problems in statistics and a survey of available methods.  Note that the inverse problem given by $g=Kf$, where $K$ is a non-negative operator on a Hilbert space with norm less than $1$, can be recast in the form of (\ref{e:IP}). Indeed, if we define the operator $A:=-\log K$ (consult the beginning of the next section or Chapter 1 of \cite{FOT} for the construction of this operator), then $A$ will correspond to the infinitesimal generator of a Markov process whose transition function at time 1 coincides with $K$, i.e $g=Kf$ becomes $g(z)=\int f(y)P_1(z,dy)$, where $(P_t)_{t\geq 0}$ is the semigroup of the Markov process with generator $-\log K$. Thus, the method that we shall describe below will contain as special cases  many inverse problems in the literature and, in particular,  the above density estimation problem of statistics. Moreover, the existing literature typically assumes that  $K$ is a compact operator to arrive at a simple  {\em singular value decomposition}. We will not need this assumption in what follows and thereby considerably extend the scope of the methodology for solving inverse problems.

The inverse problem in (\ref{e:IP}) has an alternative partial differential equation (PDE) interpretation. Suppose that for a given $g$ and fixed $T>0$ one can find a solution, $f$, to (\ref{e:IP}). Then, one can  easily show that $u(t,\cdot):=P_{T-t}f$ is a solution to  the following:
\be \label{i:e:pde}
u_t + A u=0, \qquad u(0,\cdot)=g,
\ee
where $A$ is the generator of $X$. If $A$ is a differential operator, the above is a {\em backward} PDE with an {\em initial} condition. Such equations are known to be  ill-posed in the sense of Hadamard that either there exists no solution or the solution is non-unique, or the unique solution does not have a continuous dependence on the initial data, $g$. 

If, moreover, $g$ is a probability density then  the  inverse problem (\ref{e:IP}) with positivity constraint can answer  the following question: Can we find an initial distribution for $X$ so that the probability distribution of $X_T$ is defined by $g$? As such, this question is  related to the {\em Skorokhod embedding} problem which searches for a martingale whose time $T$-distribution is given by $g$.  Ekstr\"om et al. \cite{Ekstrom} have solved this problem of Skorokhod in the one-dimensional setting  by showing the existence of a generalised diffusion with a  constant initial value, which is set from the beginning as a consequence of the martingale condition. However, the transition function of this martingale is in general cannot be obtained. The inverse problem (\ref{e:IP}), on the other hand, fixes the transition function from the start and seeks an initial distribution rather than a whole stochastic process. There is nevertheless an important drawback: Although we may obtain arguably quite explicit answers using  the positive solutions of (\ref{e:IP}), one cannot expect to find a solution, let alone a positive one, to this equation for any given distribution $g$ in general. The reason for this is that the inverse of $P_t$, $P_t^{-1}$,  is typically an unbounded linear operator and, therefore, it has only a dense domain. However, our methodology is not restricted to the one-dimensional case and works equally effective in a multi-dimensional setting.

In  what follows we aim to find  necessary and sufficient conditions for (\ref{e:IP}) to admit a suitably integrable solution. There do not seem to be many attempts in the literature to characterise the solutions of such an inversion. The first attempt when $(P_t)_{tX \geq 0}$ is the transition function of a linear Brownian motion  is by Widder \cite{WidderTams}. In \cite{WidderTams} and some subsequent works  Widder provides some necessary and sufficient conditions for the existence of a  solution to  this inverse problem, which he calls {\em Weierstrass transform}. 

We show in Theorem \ref{t:invformula} that (\ref{e:IP}) has a square integrable  solution if and only if
\[
\int_0^{\infty}I_0(2\sqrt{2Tt})\int_0^{\infty}J_0(2 \sqrt{ts})e^{-\alpha s} (P_sg,g) ds dt <\infty,
\]
where $J_0$ (resp. $I_0$) is the (resp. modified) Bessel function of the first kind of order $0$. Additionally, the same theorem  gives a formula for the inversion. Section 2 also contains a number of alternative criteria for the characterisation of the domain of $P_t^{-1}$.  In particular it is observed that the finiteness of the double integral above can be recast in terms of the last passage times in the case of one-dimensional regular diffusions.  Moreover, Corollary \ref{c:picard} gives  us a numerical recipe by means of a Picard iteration to deduce the convergence of this integral. 

As we mentioned earlier there is no solution to (\ref{e:IP}) in general for an arbitrary transition function. Theorem \ref{t:regularisation} introduces a family of regularisations of (\ref{e:IP}), which are essentially small perturbations of the original problem aimed at obtaining a solution for any given $g$. Moreover, the solution of the regularised problem is characterised in terms of the minimiser of an associated optimisation problem.

Corollary \ref{c:mixing} gives a remarkable special class of regularisations suggested by Theorem \ref{t:regularisation}. It shows that if we construct a new Markov process that amounts to choosing randomly between the original process, $X$, and a suitable pure jump process, then the inverse problem will have a solution for every $g$ as soon as $P_T$ is replaced by the corresponding transition function of the new Markov process. For example, when $X$ is a Brownian motion, the inverse problem can be turned into a well-posed one by replacing the Brownian motion with a mixture of a Brownian motion and a compound Poisson process whose jumps are normally distributed. Such mixtures of the original Markov process and a jump process are easy to construct and one can choose the probability of choosing the jump process arbitrarily small so that the jump component is virtually absent in implementations. This mixture also regularises the ill-posed backward PDE (\ref{i:e:pde}) by transforming it to a partial integro-differential equation using an arbitrarily small perturbation.

Although we are able to give a necessary and sufficient condition for the existence of a solution to (\ref{e:IP}), what is particularly missing in this paper is a comparison result. Namely, if we know that $P_T^{-1}h$ exists for some $h$, what kind of relationship between $g$ and $h$ would entail that $g$ is also in the domain of $P_T^{-1}$? A comparison theorem in the spirit of the ones that can be found in the literature on the {\em Skorokhod embedding problem} could be very useful.  Falkner \cite{falkner} has shown (under a duality assumption and another mild condition) for a general transient Markov process, $X$, with potential operator $U$ that if $U\mu \leq U\nu$ for measures $\mu$ and $\nu$, then one can find a stopping time $\tau$ such that $X_{\tau}$ has the law $\mu$ if $\nu$ is the distribution of $X_0$.  Note that in order for $g$ to be in the domain of  $P_T^{-1}$ it is necessary that $Ug \leq  Uh$ for some $h$ in the domain of $P_T^{-1}$.
However, the following counterexample\footnote{This example is due to David Hobson.} shows that this necessary condition is not sufficient: Let $g$ be the distribution of $X_{\tau}$, where $\tau = \inf\{t \geq T : |X_t| > a\}$ and $X$
is a standard Brownian motion with $X_0=0$ and killed as soon as it exits $(-2a, 2a)$. Clearly, $Ug \leq U\eps_0$, where $\eps_0$ is the Dirac measure at $0$. However, $g$ cannot be in the domain of $P_T^{-1}$ since it has a point
mass. 

The outline of the paper is as follows. Section 2 presents the setup and introduces the inverse problem. It contains Theorem \ref{t:invformula} that gives the necessary and sufficient condition for the inversion along with the inversion formula. Section 3 is devoted to the regularisation of the inverse problem and includes in particular Corollary \ref{c:mixing}, which states that the inverse problem has a solution when $X$ is replaced by a mixture of $X$ and a jump process. 
\section{An inverse problem for a symmetric Markov process} \label{s:ip}
Let us fix a Borel right Markov process $X=(\Om, \cF,\cF_t,X_t,\theta_t, P^x)$ with lifetime $\zeta$, state space $(\bfE, \sE)$, sub-Markovian semigroup $(P_t)$, and resolvent $(U^{\alpha})$. Suppose that  $\bfE$ is a locally compact separable metric space and $(P_t)$ is $m$-symmetric with respect to a $\sigma$-finite measure $m$ on the Borel $\sigma$-algebra $\sE$ with $\mbox{supp}[m]=\bfE$. More precisely, we assume that $(P_t)$ can be extended to a strongly continuous sub-Markovian semigroup on $L^2(\bfE,m)$ such that 
\[
(P_tf,g)=(f,P_tg), \qquad \forall f,g \in L^2(m),
\]
where $(\cdot, \cdot)$ denotes the inner product with respect to $m$, i.e. $(f,g)=\int_{\bfE}fgdm$ for $f,g \in L^2(\bfE,m)$.  We also assume that  $(\bfE,\sE,m)$ is a separable measure space, which in turn implies that $L^2(\bfE,m)$ is a separable Hilbert space. In the sequel we shall simply write $L^2(m)$ instead of $L^2(\bfE,m)$.

The generator, $A$, of $(P_t)$ is defined as usual via
\bea 
A f&=&\lim_{t \rar 0} \frac{P_tf -f}{t} \label{e:defA}\\
\sD(A)&:=& \{ f \in L^2(m): \mbox{The limit } (\ref{e:defA}) \mbox{ exists in } L^2(m)\}. \nn
\eea

Consequently $-A$ is a non-negative definite symmetric operator on $L^2(m)$. Thus, there exists a spectral family\footnote{That is, 1) $E_{\lambda}E_{\mu}=E_{\lambda}, \, \lambda \leq \mu$; 2) $\lambda \mapsto E_{\lambda}f$ is right continuous for any $f \in L^2(m)$; and  3) $\lim_{\lambda \rar \infty}E_{\lambda}f=f$ for all $f \in L^2(m)$. In particular $(E_{\lambda}f,g)$ is of bounded variation in $\lambda$ for any $f,g  \in L^2(m)$.} $\{E_{\lambda}:0\leq \lambda <\infty\}$ of projection operators such that $-A=\int_0^{\infty}\lambda dE_{\lambda}$. This further  entails
\[
\sD(A)=\left\{f \in L^2(m):\int_0^{\infty}\lambda^2 d(E_{\lambda}f,f)<\infty\right\}.
\]
Moreover, if $\phi:\bbR_+ \mapsto \bbR$ is a continuous function, $\phi(-A)$ is another symmetric operator on $L^2(m)$ with the spectral representation $\int_0^{\infty}\phi(\lambda) dE_{\lambda}$ and domain
\[
\sD(\phi(-A))=\left\{f \in L^2(m):\int_0^{\infty}\phi(\lambda)^2 d(E_{\lambda}f,f)<\infty\right\}.
\]
In particular, for each $t>0$ and $\alpha>0$, $P_t=\int_0^{\infty}e^{-\lambda t} dE_{\lambda}$ and $U^{\alpha}=\int_0^{\infty}\frac{1}{\lambda+\alpha}dE_{\lambda}$, and obviously, have the whole $L^2(m)$ as their domain. When $X$ is transient, the potential operator is also given by $U=\int_0^{\infty}\frac{1}{\lambda}dE_{\lambda}$. We refer the reader to Chapter 1 of \cite{FOT} for a brief account of the spectral theory associated with the  generators of strongly continuous symmetric semigroups. 
\begin{example} \label{ex:1dim} Let $\bfE=(l,r)$ with $-\infty \leq l<r \leq \infty$ and consider a one-dimensional regular diffusion on natural scale defined by the generator
\[
Af =\frac{\half d \frac{df}{dx}-udk}{dm},
\]
where the {\em killing measure}, $k$, and the {\em speed measure}, $m$, are Radon measures on $(\bfE, \sE)$.  In the absolutely continuous case the generator becomes a differential operator:
\[
Af =\frac{\sigma^2}{2}f'' - cf,
\]
where $\sigma : \bfE \mapsto \bbR_{++}$ and $c:\bfE:\mapsto \bbR_+$ are measurable functions. 

McKean \cite{MK}  has shown that the transition function, $(P_t)$ possesses a symmetric density, $(p(t,\cdot,\cdot)$, with respect to $m$ such that
\[
p(t,x,y)=\int_0^{\infty}e^{-\lambda t} e(x,y,d\lambda),
\]
where $e(x,y,\cdot)$ is a measure on $[0,\infty)$ with $e(x,y\cdot)=e(y,x,\cdot)$. We refer the reader to \cite{MK} or \cite{IK} for more details on the general theory of one-dimensional diffusions and the eigendifferential expansions of their transition densities. 

When the diffusion has no natural boundaries Elliott \cite{jelliott} has shown earlier that the spectrum of the generator is discrete, which in turn implies that the transition density with respect to $m$ is given by
\[
p(t,x,y) =\sum_{n=0}^{\infty} e^{-\lambda_n t} \phi_n(x)\phi_n(y),
\]
where $0\leq \lambda_0 \leq \lambda_1 \leq \ldots \lambda_n \leq \ldots$ with $\lambda_n \uparrow \infty$ and $\phi_n$ is the solution of $A\phi_n = -\lambda_n \phi_n$   for appropriate boundary conditions given by the behaviour of the diffusion near $l$ and $r$. 
\end{example}
\begin{example} \label{ex:jump} Suppose that $q(x,y)=q(y,x)$ and $qdm$ defines a transition function on $(\bfE, \sE)$. In particular,
\[
\int_{\bfE}q(x,y) m(dy)\leq 1.
\]
Then, it can be directly verified that $A$ defined by
\[
Af(x) =\int_{\bfE} f(y)q(x,y)m(dy) -f(x)
\]
is a bounded symmetric operator corresponding to a Markov jump process (see, e.g., Section 4.2 in \cite{EK}) that remains constant between  jumps of a Poisson process with unit intensity and moves between the states of $\bfE$ according to the kernel $q$, or is being sent to the cemetery state with probability $1-\int_{\bfE}q(x,y) m(dy)$.

In the particular case of $\bfE=\bbR$, $q(x,y)=q(y-x)$ for some symmetric function, $q$, $A$ is the generator of a compound Poisson process whose jumps have a symmetric distribution around $0$ with $m$-density $q$, provided
\[
\int_{-\infty}^{\infty} q(x,y)m(dy)=1, \; \forall x\in \bbR.
\]
\end{example}
Next consider the inverse operator, $P_{t}^{-1}$, for $t>0$. That is,
$g \in \sD(P_t^{-1})$ if $g \in L^2(m)$ and there exists $f \in L^2(m)$ such that $P_t f=g$. In this case we shall define $P_t^{-1}g$ to be $f$. Note that this operation is well-defined. Indeed, if $f_1, f_2 \in L^2(m)$ are such that $g=P_tf_1=P_tf_2$, then $P_t(f_1-f_2)=0$. However, in view of the spectral representation of $P_t$, this yields $(E_{\lambda}(f_1-f_2), f_1-f_2)=0$ for all $\lambda \geq 0$, which in turn implies $f_1=f_2$, $m$-a.e. due to the fact that $\lim_{\lambda\rar \infty}(E_{\lambda}f,f)=(f,f)$ for any $f \in L^2(m)$.

Observe that, since $P_t$ is a bounded self-adjoint operator, $P_t^{-1}$ is also a symmetric operator on $L^2(m)$. The following, which should formally follow from spectral calculus, characterises $P_t^{-1}$ in terms of the spectral family $(E_{\lambda})$.
\begin{theorem}\label{t:invchar} Let $P_t^{-1}$ be the inverse of  $P_t$ for $t>0$. Then the following hold. 
\bean
\sD(P_t^{-1})&=&\left\{g \in L^2(m):\int_0^{\infty}e^{2 \lambda t} d(E_{\lambda}g,g)<\infty\right\} \\
P_t^{-1}g&=&  \int_0^{\infty}e^{\lambda t} dE_{\lambda}g. \label{e:invrep}
\eean
\end{theorem}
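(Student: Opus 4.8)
The plan is to read the two claimed identities as the single assertion that $P_t^{-1}$ coincides with $\phi(-A)$ for the continuous function $\phi(\lambda)=e^{\lambda t}$, and to establish this by proving the two set-inclusions separately, recovering the inversion formula along the way. By the spectral theory recalled above, the set on the right-hand side is exactly $\sD(\phi(-A))$, on which $\int_0^{\infty}e^{\lambda t}dE_{\lambda}$ is a well-defined symmetric operator; since the well-definedness of $P_t^{-1}g$ has already been observed just before the statement, it suffices to match domains and values.

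First I would treat the inclusion $\sD(P_t^{-1}) \subseteq \{g : \int_0^{\infty}e^{2\lambda t} d(E_{\lambda}g,g)<\infty\}$, which is the quick direction. Take $g \in \sD(P_t^{-1})$, so $g=P_t f$ for some $f\in L^2(m)$. Since every bounded Borel function of $-A$ commutes with the spectral projections, $E_{\lambda}g=E_{\lambda}P_t f=P_t E_{\lambda}f$, and using self-adjointness of $E_{\lambda}$ and $P_t$ together with $P_tP_t=P_{2t}$ one gets
\[
(E_{\lambda}g,g)=(E_{\lambda}P_{2t}f,f)=\int_0^{\lambda}e^{-2\mu t}\,d(E_{\mu}f,f).
\]
Hence $d(E_{\lambda}g,g)=e^{-2\lambda t}d(E_{\lambda}f,f)$, and therefore $\int_0^{\infty}e^{2\lambda t}d(E_{\lambda}g,g)=\int_0^{\infty}d(E_{\lambda}f,f)=\|f\|^2<\infty$, which is the required bound.

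For the reverse inclusion, together with the inversion formula, I would suppose $g$ satisfies $\int_0^{\infty}e^{2\lambda t}d(E_{\lambda}g,g)<\infty$ and set $f:=\int_0^{\infty}e^{\lambda t}dE_{\lambda}g$; the integrability condition guarantees $\|f\|^2=\int_0^{\infty}e^{2\lambda t}d(E_{\lambda}g,g)<\infty$, so $f\in L^2(m)$. It remains to verify $P_tf=g$, and this is the one point needing care, since it amounts to the multiplicativity $P_t\phi(-A)=I$ of the functional calculus with the \emph{unbounded} factor $\phi(-A)$. I would avoid invoking the unbounded product rule directly and instead truncate: put $f_n:=\int_0^{n}e^{\lambda t}dE_{\lambda}g$, so that $\|f-f_n\|^2=\int_n^{\infty}e^{2\lambda t}d(E_{\lambda}g,g)\rar 0$. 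For each fixed $n$, multiplicativity of the calculus for \emph{bounded} functions gives $P_tf_n=\int_0^{n}e^{-\lambda t}e^{\lambda t}dE_{\lambda}g=E_n g$, and $E_ng\rar g$ by the defining property $\lim_{\lambda\rar\infty}E_{\lambda}g=g$. Since $P_t$ is bounded, $P_tf_n\rar P_tf$, and comparing the two limits yields $P_tf=g$. Thus $g\in\sD(P_t^{-1})$ with $P_t^{-1}g=f=\int_0^{\infty}e^{\lambda t}dE_{\lambda}g$, which is both the missing inclusion and the inversion formula. The only genuine obstacle is precisely this passage through the unbounded functional calculus; the truncation device keeps every manipulation inside the unproblematic bounded calculus until the final limit is taken.
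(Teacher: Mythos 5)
Your proof is correct and follows essentially the same route as the paper's: the same two inclusions, the same spectral computation $d(E_{\lambda}g,g)=e^{-2\lambda t}\,d(E_{\lambda}f,f)$ for the easy direction, and the same candidate $f=\int_0^{\infty}e^{\lambda t}\,dE_{\lambda}g$ for the converse. The only difference is that your truncation $f_n=\int_0^{n}e^{\lambda t}\,dE_{\lambda}g$ supplies a careful justification of the identity $P_tf=g$, a step the paper asserts directly from the functional calculus with the unbounded factor.
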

\begin{proof}
Let 
\[
D=\left\{g \in L^2(m):\int_0^{\infty}e^{2 \lambda t} d(E_{\lambda}g,g)<\infty\right\}.
\]
This is clearly the domain of the operator $I_t$ on $L^2(m)$, where
\[
I_t=\int_0^{\infty}e^{\lambda t} dE_{\lambda}.
\]
Pick an arbitrary $g \in \sD(P_t^{-1})$. By definition there exists $f \in L^2(m)$ such that $g=P_tf$. Using the spectral representation of the semigroup we may write
\[
g=\int_0^{\infty}e^{-\lambda t} dE_{\lambda}f.
\]
Thus,
\bean
\int_0^{\infty}e^{2 \lambda t} d(E_{\lambda}g,g)&=&\int_0^{\infty}e^{2 \lambda t} d(E_{\lambda}P_tf,P_tf)=\int_0^{\infty}e^{2 \lambda t} d(E_{\lambda}P_{2t}f,f)\\
&=&\int_0^{\infty} d(E_{\lambda}f,f)=\|f\|^2<\infty,
\eean
where the second equality follows from the symmetry of $P_t$, $dE_{\lambda}P_tf=e^{-\lambda t}dE_{\lambda}f$, and the fact that $E_{\lambda}$ and $P_t$ commute. Thus, $\sD(P_t^{-1}) \subset D$.  Moreover,
\[
I_t g = \int_0^{\infty}e^{\lambda t} dE_{\lambda}g= \int_0^{\infty}e^{\lambda t} dE_{\lambda}P_tf =\int_0^{\infty} dE_{\lambda}f =f,
\]
i.e, $P_t^{-1}=I_t$ on $\sD(P_t^{-1})$.

Thus, it remains to show that $D \subset \sD(P_t^{-1})$. Indeed, let $g \in D$ and set $f =I_t g$. Note that $f \in L^2(m)$ by the definition of $D$. Moreover, $dE_{\lambda }f=e^{\lambda t} dE_{\lambda }g$. Therefore,
\[
P_t f =\int_0^{\infty}e^{-\lambda t} dE_{\lambda }f=\int_0^{\infty} dE_{\lambda }g=g.
\]
Hence, $D \subset \sD(P_t^{-1})$.
\end{proof}
The above result illustrates the first difficulty with inverting $P_t$. When $A$ is an unbounded operator, which is usually the case, so is $P_t^{-1}$. In this case $P_t^{-1}$ will have a dense domain, characterisation of which is one of the main goals of this paper. 

On the other hand, if $A$ is  bounded,  $E_{\lambda}$ becomes the identity operator for all $\lambda\geq M$ for some $M<\infty$.  In view of the above representation for $P_t^{-1}$ and its domain, this boundedness property will be inherited by $P_{t}^{-1}$. 
\begin{corollary} Suppose that the generator, $A$, of $(P_t)$ is bounded. Then $\sD(P_t^{-1})=L^2(m).$ In particular (\ref{e:invrep}) holds for all $g \in L^2(m)$.
\end{corollary}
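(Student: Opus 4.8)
The plan is to read off the result directly from the integral characterisation of $\sD(P_t^{-1})$ supplied by Theorem \ref{t:invchar}, using the fact that boundedness of $A$ confines the spectral measure to a compact interval. First I would recall that $-A=\int_0^{\infty}\lambda\,dE_{\lambda}$ is a non-negative self-adjoint operator, so under the hypothesis that $A$ is bounded we may set $M:=\|A\|=\|{-A}\|<\infty$. Standard spectral theory for bounded self-adjoint operators (as in Chapter 1 of \cite{FOT}) then tells us that the spectrum of $-A$ lies in $[0,M]$, whence the spectral family stabilises: $E_{\lambda}=I$ for every $\lambda\geq M$. Consequently, for any fixed $g\in L^2(m)$ the finite non-negative measure $\lambda\mapsto d(E_{\lambda}g,g)$ is carried by the compact interval $[0,M]$, with total mass $\lim_{\lambda\to\infty}(E_{\lambda}g,g)=(g,g)=\|g\|^2$.

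Next I would apply the criterion of Theorem \ref{t:invchar}. For an arbitrary $g\in L^2(m)$ the compact support of the spectral measure lets me dominate the relevant integral trivially:
\[
\int_0^{\infty}e^{2\lambda t}\,d(E_{\lambda}g,g)=\int_0^{M}e^{2\lambda t}\,d(E_{\lambda}g,g)\leq e^{2Mt}\int_0^{M}d(E_{\lambda}g,g)=e^{2Mt}\|g\|^2<\infty,
\]
where the first equality uses that the measure vanishes off $[0,M]$ and the inequality uses $e^{2\lambda t}\leq e^{2Mt}$ on the support. Since this holds for every $g\in L^2(m)$, Theorem \ref{t:invchar} immediately yields $\sD(P_t^{-1})=L^2(m)$, and the inversion formula (\ref{e:invrep}) then holds on all of $L^2(m)$ because that same theorem asserts it throughout the domain.

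There is essentially no genuine obstacle here, as the bulk of the work has already been done in Theorem \ref{t:invchar}. The only point deserving a word of justification is the passage from boundedness of $A$ to the stabilisation $E_{\lambda}=I$ for $\lambda\geq M$; I would back this up by invoking uniqueness of the spectral resolution together with the containment of the spectrum of a bounded non-negative operator in $[0,\|A\|]$. Everything after that is the single-line domination displayed above, so I expect the \emph{entire} difficulty of the statement to reduce to correctly identifying this compact-support phenomenon.
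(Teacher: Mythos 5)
Your proposal is correct and follows exactly the paper's route: the paper treats this corollary as an immediate consequence of the remark preceding it, namely that boundedness of $A$ forces $E_{\lambda}=I$ for all $\lambda\geq M$ with $M<\infty$, so that the integral criterion and representation of Theorem \ref{t:invchar} apply to every $g\in L^2(m)$. You have merely written out the one-line domination $\int_0^{\infty}e^{2\lambda t}\,d(E_{\lambda}g,g)\leq e^{2Mt}\|g\|^2$ that the paper leaves implicit, which is a faithful completion of the same argument.
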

\begin{remark} \label{r:positive} It is tempting to conclude that $P_t^{-1}g$ is nonnegative when $g\geq 0$ and belongs to $\cD(P_t^{-1})$. This would be especially handy when one needs to estimate the true density $f$ by observing an auxiliary density $g$ using the relationship $g=P_t f$.  However, the positivity of $f$  does not in general hold although one can find instances in the literature (see, e.g., the beginning of Section 3.3 in \cite{KooChung}) where this issue is overlooked. 

To see this in a concrete example suppose that $X$ is an Ornstein-Uhlenbeck process, i.e
\[
X_t=X_0 + B_t -r\int_0^t X_s ds, \qquad r>0.
\]
Then, conditional on $X_0=x$, $X_t$ is normally distributed with mean $xe^{rt}$ and variance $\frac{1-e^{-2rt}}{2r}$. The speed measure for this diffusion is given by
\[
m(dx) =e^{-rx^2}dx,
\]
thus its generator, $A$, is symmetric with respect to $m$. Then, if one takes $g=x^2$, it follows from a simple computation that $g=P_1f$, where 
\[
f(x)= e^{2r}x^2 -\frac{e^{2r}-1}{2r}.
\]
Note that both $f$ and $g$ belong to $L^2(m)$. However, $f$ is not always nonnegative on $\bbR$.
\end{remark}

As mentioned in Introduction the inverse problem (\ref{e:IP}) is intimately linked to the solution of a Cauchy problem, which becomes a backward partial differential equation when $A$ is a differential operator.
\begin{corollary} \label{c:invpde} The following hold for any fixed $T>0$.
\begin{enumerate} 
\item Suppose that $g \in \sD(P_T^{-1})$. Then there exist $(u(t,\cdot))_{t \in [0,T]}$ such that $u(t,\cdot) \in L^2(m)$ for all $t \in [0,T]$, and
\be \label{e:bpde}
u_t+Au=0, \; t>0, \mbox{ and } \quad u(0, \cdot)=g,
\ee
where $u_t(t, \cdot):=\lim_{h \rar 0} \frac{u(t+h,\cdot)-u(t,\cdot)}{h}$ and the  limit is in $L^2(m)$. 
\item Conversely, if there exists a family  $(u(t,\cdot))_{t \in [0,T]}\subset L^2(m)$ solving (\ref{e:bpde}) for a given $g \in L^2(m)$, then $g \in \sD(P_T^{-1})$. 
\end{enumerate}
Consequently, there exists a unique solution of (\ref{e:bpde}) in $L^2(m)$ if and only if $g \in \sD(P_T^{-1})$. Moreover, $P_T^{-1}g=u(T,\cdot)$.

\end{corollary}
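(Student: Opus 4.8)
The plan is to prove Corollary \ref{c:invpde} by exploiting the spectral representation established in Theorem \ref{t:invchar} and the link $u(t,\cdot)=P_{T-t}f$ alluded to in the introduction. For part (1), given $g\in\sD(P_T^{-1})$, I would set $f:=P_T^{-1}g$ and \emph{define} $u(t,\cdot):=P_{T-t}f$ for $t\in[0,T]$. Since $f\in L^2(m)$ and each $P_{T-t}$ is a bounded operator on $L^2(m)$, each $u(t,\cdot)$ lies in $L^2(m)$ automatically, giving the required regularity. The initial condition $u(0,\cdot)=P_Tf=P_T P_T^{-1}g=g$ holds by the very definition of $P_T^{-1}$. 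The only substantive point is verifying the PDE $u_t+Au=0$, which I would handle via the spectral calculus: writing $u(t,\cdot)=\int_0^\infty e^{-\lambda(T-t)}dE_\lambda f$, one differentiates in $t$ to obtain (formally) $u_t=\int_0^\infty \lambda e^{-\lambda(T-t)}dE_\lambda f$ and $Au=-\int_0^\infty\lambda e^{-\lambda(T-t)}dE_\lambda f$, which cancel. To make this rigorous in the $L^2(m)$-limit sense required by the statement, I would show the difference quotient $\frac{u(t+h,\cdot)-u(t,\cdot)}{h}$ converges in $L^2(m)$ by dominated convergence applied to the measure $d(E_\lambda f,f)$, noting that on $[0,T)$ the exponential weight $e^{-\lambda(T-t)}$ keeps the relevant integrals finite even when $f\notin\sD(A)$.

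For the converse, part (2), I would suppose $(u(t,\cdot))_{t\in[0,T]}\subset L^2(m)$ solves \eqref{e:bpde} and show $u(t,\cdot)=P_{T-t}u(T,\cdot)$, or equivalently that $P_tu(t,\cdot)$ is constant in $t$. The natural device is to consider $v(t):=P_t\,u(T-t,\cdot)$ for $t\in[0,T]$ and show it is constant. Differentiating, one expects $\frac{d}{dt}v(t)=A P_t u(T-t,\cdot)-P_t u_s(T-t,\cdot)$, and the PDE $u_s+Au=0$ together with the commutation of $P_t$ and $A$ forces this derivative to vanish. Hence $v(0)=v(T)$, i.e. $u(T,\cdot)=P_T\,u(0,\cdot)=P_Tg$, so that $g\in\sD(P_T^{-1})$ with $P_T^{-1}g=u(T,\cdot)$, which simultaneously establishes the final two assertions of the corollary.

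The main obstacle I anticipate is the rigorous justification of the differentiation steps. In part (1) the subtlety is that $f=P_T^{-1}g$ need \emph{not} belong to $\sD(A)$, so the expression $Au(t,\cdot)$ must be interpreted carefully for $t$ near $0$; however, for $t\in(0,T)$ the smoothing of the semigroup places $u(t,\cdot)$ into $\sD(A)$ (indeed into $\sD(A^k)$ for all $k$), because the weight $\lambda^{2}e^{-2\lambda(T-t)}$ is integrable against $d(E_\lambda f,f)$. I would therefore verify the PDE on the open interval and treat the endpoint $t=0$ purely through the initial condition, which is how \eqref{e:bpde} is phrased (the equation is imposed for $t>0$). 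In part (2) the delicate point is that the hypothesised solution is only assumed to be $L^2(m)$-valued with an $L^2$-differentiable time derivative, so to differentiate $v(t)=P_t u(T-t,\cdot)$ I must combine the strong continuity and contraction properties of $(P_t)$ with the assumed differentiability of $u$, controlling the product rule in the Bochner sense; here the boundedness of $P_t$ uniformly in $t$ and the commutation $P_tA\subset AP_t$ on $\sD(A)$ are the key analytic inputs. Both difficulties are resolved by reducing everything to scalar statements about the finite Borel measures $d(E_\lambda u(t,\cdot),u(t,\cdot))$ and invoking dominated convergence, so no genuinely new estimate beyond Theorem \ref{t:invchar} is needed.
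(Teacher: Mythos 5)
Your part (1) is correct and is essentially the paper's own argument: set $f=P_T^{-1}g$, define $u(t,\cdot)=P_{T-t}f$, and justify $u_t=-Au$ by differentiating the spectral representation under the integral via dominated convergence, noting that $P_{T-t}f\in\sD(A)$ for $t<T$ because $\lambda^2e^{-2\lambda(T-t)}$ is bounded. No issue there.

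Part (2), however, contains a genuine error. The function $v(t):=P_t\,u(T-t,\cdot)$ is \emph{not} constant: by the product and chain rules,
\[
\frac{d}{dt}v(t)=AP_t u(T-t,\cdot)-P_t u_s(T-t,\cdot),
\]
and substituting the equation $u_s=-Au$ gives
\[
\frac{d}{dt}v(t)=AP_t u(T-t,\cdot)+P_t A u(T-t,\cdot)=2AP_t u(T-t,\cdot),
\]
because the minus sign from the time reversal $t\mapsto T-t$ and the minus sign in the backward equation cancel each other rather than producing the cancellation you claim. (Formally $v(t)=P_{2t-T}g$, visibly non-constant.) Your device is the classical one for the \emph{forward} equation $u_t=Au$; for the backward equation $u_t+Au=0$ the matching composition is the one you yourself state as the reformulated goal and then abandon, namely $w(t):=P_t u(t,\cdot)$: for $t\in(0,T)$,
\[
w'(t)=AP_t u(t,\cdot)+P_t u_t(t,\cdot)=P_t\bigl(Au(t,\cdot)+u_t(t,\cdot)\bigr)=0,
\]
using $P_tA\subset AP_t$ on $\sD(A)$, so $w$ is constant and $P_T u(T,\cdot)=w(T)=w(0)=u(0,\cdot)=g$, which is exactly the assertion $g\in\sD(P_T^{-1})$ with $P_T^{-1}g=u(T,\cdot)$; uniqueness then follows from the injectivity of $P_T$ established in the paper just before Theorem \ref{t:invchar}. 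Note also that even if your $v$ were constant, the identity it would yield, $u(T,\cdot)=P_Tg$, runs in the wrong direction: it exhibits $u(T,\cdot)$, not $g$, as an element of $\sD(P_T^{-1})$, so the final identification in your argument is backwards as well. With the corrected device your route becomes a valid and genuinely different alternative to the paper's proof of the converse, which instead applies $E_\lambda$ to the integrated equation and solves the resulting scalar ODE to obtain $dE_\lambda u(t,\cdot)=e^{\lambda t}dE_\lambda g$; the repaired semigroup argument avoids the spectral family entirely in this step, but as submitted the key computation fails.
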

\begin{proof}
Let $f=P_T^{-1}g$ and define $u(t,\cdot)=P_{T-t}f$. First observe that since $f \in L^2(m)$, $P_tf \in \sD(A)$ for all $t>0$. Indeed,
\[
\int_0^{\infty}\lambda^2d(E_{\lambda}P_tf, P_tf)=\int_0^{\infty}\lambda^2e^{-2\lambda t} d(E_{\lambda}f, f)\leq \frac{1}{t^2 e^2}\int_0^{\infty}d(E_{\lambda}f, f)<\infty.
\]
Thus,
\[
AP_{T-t}f=-\int_0^{\infty}\lambda e^{-(T-t)\lambda}dE_{\lambda}f.
\]
Moreover, 
\[
\frac{d}{dt}P_{T-t}f=\frac{d}{dt}\int_0^{\infty} e^{-(T-t)\lambda}dE_{\lambda}f=\int_0^{\infty} \lambda e^{-(T-t)\lambda}dE_{\lambda}f
\]
by virtue of the dominated convergence theorem since $(E_{\lambda}f, f)$ is of bounded variation in $\lambda$ and $x^2e^{-2x}$ is bounded on $[0,\infty)$. Therefore, $u$ solves (\ref{e:bpde}) since $u(0,\cdot)= P_Tf=g$. 

Conversely, suppose $u$ is a solution of (\ref{e:bpde}) in $L^2(m)$. In particular, $u(t, \cdot) \in \sD(A)$ for $t \in (0,T]$.  Thus for any $t \in (0,T]$, we have 
\be \label{e:uevolve}
u(t,\cdot)=g + \int_0^t u_t(s,\cdot)ds= g - \int_0^s ds A u(s,\cdot),
\ee
where the integrals are to be understood as Bochner integrals in $L^2(m)$. 

Next observe that for any $\lambda \geq 0$ and $f \in L^2(m)$,
\[
E_{\lambda}f = \int_0^{\lambda} dE_{\mu}f \in \sD(A),
\]
and 
\[
AE_{\lambda}f=-\int_0^{\lambda}\mu dE_{\mu}f.
\]
Applying $E_{\lambda}$ to both sides of (\ref{e:uevolve}) and exploiting the commutativity of $E_{\lambda}$ and $A$ we  obtain
\[
E_{\lambda}u(t,\cdot)= E_{\lambda} g +\int_0^t ds \int_0^{\lambda}\mu dE_{\mu}u(s,\cdot).
\]
However, the unique solution of the above equation is given by
\[
E_{\lambda}u(t,\cdot)=\int_0^{\lambda}e^{\mu t} dE_{\mu}g,
\]
which readily yields $dE_{\lambda}u(t,\cdot)=e^{\lambda t} dE_{\lambda}g$.
Therefore,
\[
P_T u(T,\cdot)= \int_0^{\infty} e^{-\lambda T} dE_{\lambda} u(T,\cdot)=g.
\]
Since $u(T,\cdot) \in L^2(m)$, we deduce that $g \in \sD(P_T^{-1})$.

Thus, we have shown that there is a one-to-one  correspondence between $\sD(P_{T}^{-1})$ and the $L^2$-solutions of (\ref{e:bpde}). Moreover, since $P_{T}^{-1}g$ is uniquely determined, any solution of (\ref{e:bpde}) satisfies $u(T,\cdot)=P_{T}^{-1}g$.

Finally, by virtue of $dE_{\lambda}u(t,\cdot)=e^{\lambda t} dE_{\lambda}g$  we readily establish the uniqueness of $L^2$-solutions of (\ref{e:bpde}) under the assumption that $g \in \sD(P_T^{-1})$.
\end{proof}
Theorem \ref{t:invchar} characterises the domain of $P_t^{-1}$ completely. However, it requires the knowledge of the spectral resolution. Theorem \ref{t:invformula}, on the other hand,  determines the domain of $P_T^{-1}$ in terms of the transition function. Before its statement let us introduce a new operator on $L^2(m)$:
\be \label{e:Jdef}
\cJ^{\alpha}_t f := \int_0^{\infty}J_0(2 \sqrt{ts})e^{-\alpha s} P_sf ds,
\ee
where $\alpha >0$, $J_0$ is the Bessel function of the first kind of order $0$, and the integral is to be understood as a Bochner integral. Since $J_0$ is bounded and $U^{\alpha}$ is a bounded operator, it follows that $\cJ^{\alpha}_t$ is also a bounded operator and, thus, has $L^2(m)$ as its domain. 
\begin{proposition} \label{p:J} Let $(\cJ^{\alpha}_t)$ be the family of operators defined by (\ref{e:Jdef}). For each $t>0$ and $\alpha>0$ $\cJ^{\alpha}$ is a non-negative self-adjoint operator on $L^2(m)$ with the following spectral resolution:
\be \label{e:Jspec}
\cJ^{\alpha}_t  =\int_0^{\infty} \frac{1}{\lambda+\alpha}e^{-\frac{t}{\lambda+\alpha}}dE_{\lambda}.
\ee
Moreover, for any $f \in L^2(m)$, the mapping $t \mapsto (\cJ_t^{\alpha} f, f)$ is convex in $t$ and decreases to $0$ as $t \rar \infty$.
\end{proposition}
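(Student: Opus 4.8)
The plan is to reduce everything to the spectral calculus for $-A$ together with a single scalar Laplace-transform identity for $J_0$. First I would record that the Bochner integral defining $\cJ_t^{\alpha}f$ converges absolutely: since $|J_0|\leq 1$ and $\|P_sf\|\leq \|f\|$, we have $\int_0^{\infty}|J_0(2\sqrt{ts})|e^{-\alpha s}\|P_sf\|\,ds\leq \|f\|/\alpha<\infty$, confirming that $\cJ_t^{\alpha}$ is a well-defined bounded operator (as already noted in the text). To identify its spectral form I would pair it against an arbitrary $g\in L^2(m)$ and insert the spectral representation $P_s=\int_0^{\infty}e^{-\lambda s}\,dE_{\lambda}$, obtaining
\[
(\cJ_t^{\alpha}f,g)=\int_0^{\infty}J_0(2\sqrt{ts})e^{-\alpha s}\left(\int_0^{\infty}e^{-\lambda s}\,d(E_{\lambda}f,g)\right)ds.
\]

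The first key step is to interchange the two integrals by Fubini. This is legitimate because the spectral measure $d(E_{\lambda}f,g)$ has total variation bounded by $\|f\|\,\|g\|$ (the increments $\Delta E$ are mutually orthogonal projections, so Cauchy--Schwarz applied to $\sum\|\Delta E f\|\,\|\Delta E g\|$ gives the bound), while $\int_0^{\infty}e^{-\alpha s}\,ds=1/\alpha$; hence the double integral converges absolutely and the interchange leaves the inner integral $\int_0^{\infty}e^{-(\alpha+\lambda)s}J_0(2\sqrt{ts})\,ds$. The second key step is the evaluation of this scalar integral. Expanding $J_0(2\sqrt{ts})=\sum_{k\geq 0}\frac{(-1)^k}{(k!)^2}(ts)^{k}$ and integrating term by term against $e^{-ps}$ (with $p=\alpha+\lambda$) gives $\int_0^{\infty}e^{-ps}J_0(2\sqrt{ts})\,ds=\frac{1}{p}\sum_{k\geq 0}\frac{1}{k!}(-t/p)^k=\frac{1}{p}e^{-t/p}$. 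Substituting $p=\lambda+\alpha$ yields exactly $\frac{1}{\lambda+\alpha}e^{-t/(\lambda+\alpha)}$, and since $g$ was arbitrary this establishes the spectral resolution (\ref{e:Jspec}). Non-negativity and self-adjointness are then immediate, because the integrand $\frac{1}{\lambda+\alpha}e^{-t/(\lambda+\alpha)}$ is real and strictly positive for all $\lambda\geq 0$.

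For the final assertion I would set $\phi_{\lambda}(t):=\frac{1}{\lambda+\alpha}e^{-t/(\lambda+\alpha)}$, so that $(\cJ_t^{\alpha}f,f)=\int_0^{\infty}\phi_{\lambda}(t)\,d(E_{\lambda}f,f)$ with $d(E_{\lambda}f,f)$ a finite non-negative measure. Since $\phi_{\lambda}'(t)=-(\lambda+\alpha)^{-2}e^{-t/(\lambda+\alpha)}<0$ and $\phi_{\lambda}''(t)=(\lambda+\alpha)^{-3}e^{-t/(\lambda+\alpha)}>0$, each $\phi_{\lambda}$ is convex and strictly decreasing; integrating against the non-negative measure preserves both properties, giving convexity and monotonicity of $t\mapsto(\cJ_t^{\alpha}f,f)$. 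Finally $\phi_{\lambda}(t)\downarrow 0$ pointwise as $t\rar\infty$ and $\phi_{\lambda}(t)\leq(\lambda+\alpha)^{-1}$, the latter being integrable because $\int_0^{\infty}(\lambda+\alpha)^{-1}d(E_{\lambda}f,f)=(U^{\alpha}f,f)<\infty$ by the resolvent's spectral representation; dominated convergence then gives $(\cJ_t^{\alpha}f,f)\rar 0$.

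I expect the main obstacle to be the rigorous justification of the Fubini interchange, since it mixes a Bochner integral valued in $L^2(m)$ with a spectral (operator-valued) integral. Phrasing everything through the scalar sesquilinear forms $(\,\cdot\,,g)$, where $d(E_{\lambda}f,g)$ is an honest complex measure of bounded variation, is precisely what renders the interchange routine; the two remaining computations (the power-series evaluation of the $J_0$-Laplace transform and the sign of the derivatives of $\phi_{\lambda}$) are elementary once this reduction is in place.
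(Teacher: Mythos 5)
Your proposal is correct and follows essentially the same route as the paper: pairing against an arbitrary $g$, applying Fubini with the spectral representation of $P_s$, and evaluating the scalar Laplace transform $\int_0^{\infty}e^{-ps}J_0(2\sqrt{ts})\,ds=\frac{1}{p}e^{-t/p}$ to obtain (\ref{e:Jspec}), from which positivity, self-adjointness, convexity, and the vanishing limit all follow. The only cosmetic differences are that you derive the $J_0$ identity from its power series rather than citing Abramowitz--Stegun, and you use dominated convergence (with the integrable majorant $(\lambda+\alpha)^{-1}$) where the paper invokes monotone convergence for the limit $t\rar\infty$.
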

\begin{proof}
Let us first show that (\ref{e:Jspec}) holds. Indeed, using Fubini and the spectral representation of $P_t$ along with the fact that $\int_0^{\infty} e^{-\alpha s} J_0(2\sqrt{ts})ds=e^{-\frac{t}{\alpha}}/\alpha$ (see Table 29.2 in \cite{AS}), we obtain
\[
(\cJ_t^{\alpha}f,g)=\int_0^{\infty}\left(\int_0^{\infty} J_0(2\sqrt{ts})e^{-(\alpha+\lambda) s}ds \right)d(E_{\lambda} f,g)=\int_0^{\infty} \frac{1}{\lambda+\alpha}e^{-\frac{t}{\lambda+\alpha}}d(E_{\lambda}f,g),
\]
which yields (\ref{e:Jspec}). Thus,
\[
(\cJ_t^{\alpha}f,f)=\int_0^{\infty} \frac{1}{\lambda+\alpha}e^{-\frac{t}{\lambda+\alpha}}d(E_{\lambda}f,f)\geq 0
\]
since $E_{\lambda}$ is a non-negative operator. It  can be checked directly  that $\cJ_t^{\alpha}$ is symmetric, and therefore self-adjoint due to its boundedness. 

The spectral representation also yields the monotonicity and the convexity of the map $t \mapsto (\cJ_t^{\alpha} g, g)$. The fact that $\lim_{t \rar \infty}(\cJ_t^{\alpha} f, f) =0$ is a consequence of the monotone convergence theorem and the assumption that $f \in L^2(m)$.
\end{proof}
\begin{theorem} \label{t:invformula}
$g \in \sD(P_t^{-1})$ if and only if 
\[
\int_0^{\infty}I_0(2\sqrt{2ts})(\cJ^{\alpha}_s g,g) ds <\infty,
\]
where $I_0$ is the modified Bessel function of the first kind of order $0$. Moreover, if $g \in \sD(P_t^{-1})$, then $P_t^{-1}g$ equals a Bochner integral as follows:
\be \label{e:invformula}
P_t^{-1}g=e^{-\alpha t}\int_0^{\infty}I_0(2\sqrt{ts})\cJ^{\alpha}_sgds.
\ee
\end{theorem}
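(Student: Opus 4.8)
The plan rests on combining the spectral representation of $\cJ_s^\alpha$ from Proposition \ref{p:J} with the domain characterisation of Theorem \ref{t:invchar}, glued together by a single Laplace-type identity for the modified Bessel function. The identity I would first record is
\[
\int_0^\infty I_0(2\sqrt{Ts})\,e^{-ps}\,ds=\frac{1}{p}\,e^{T/p},\qquad T\ge 0,\ p>0,
\]
which follows from the power series $I_0(2\sqrt{Ts})=\sum_{k\ge 0}(Ts)^k/(k!)^2$ and term-by-term integration, each term contributing $T^k k!/((k!)^2 p^{k+1})$; the resulting series $\frac1p\sum_k (T/p)^k/k!$ sums to $\frac1p e^{T/p}$, and the interchange is legitimate since all terms are nonnegative. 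This is the exact counterpart, for $I_0$, of the identity $\int_0^\infty J_0(2\sqrt{ts})e^{-\alpha s}ds=e^{-t/\alpha}/\alpha$ already used in Proposition \ref{p:J}.

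For the necessary and sufficient condition I would insert the spectral form $(\cJ_s^\alpha g,g)=\int_0^\infty (\lambda+\alpha)^{-1}e^{-s/(\lambda+\alpha)}\,d(E_\lambda g,g)$ into the integral and apply Tonelli, which is permitted because every factor ($I_0\ge 0$, the exponential kernel, and the positive spectral measure $d(E_\lambda g,g)$) is nonnegative. The inner $s$-integral is then evaluated by the identity above with $T=2t$ and $p=(\lambda+\alpha)^{-1}$, giving $(\lambda+\alpha)e^{2t(\lambda+\alpha)}$, so that
\[
\int_0^\infty I_0(2\sqrt{2ts})(\cJ_s^\alpha g,g)\,ds=e^{2\alpha t}\int_0^\infty e^{2\lambda t}\,d(E_\lambda g,g).
\]
Since $e^{2\alpha t}$ is a finite positive constant, the left-hand side is finite precisely when $\int_0^\infty e^{2\lambda t}\,d(E_\lambda g,g)<\infty$, which by Theorem \ref{t:invchar} is exactly the condition $g\in\sD(P_t^{-1})$.

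For the inversion formula I would avoid wrestling with absolute integrability on all of $(0,\infty)$ and instead work with the truncations $V_N:=e^{-\alpha t}\int_0^N I_0(2\sqrt{ts})\cJ_s^\alpha g\,ds$, each of which is a bona fide Bochner integral since the integrand is norm-continuous and norm-bounded on the compact interval $[0,N]$ (indeed $\|\cJ_s^\alpha g\|\le\|g\|/\alpha$ and $I_0(2\sqrt{ts})\le I_0(2\sqrt{tN})$ there). Testing against an arbitrary $h\in L^2(m)$ and using the ordinary Fubini theorem on the finite interval $[0,N]$ one identifies the spectral representation $V_N=\int_0^\infty c_N(\lambda)\,dE_\lambda g$, where
\[
c_N(\lambda)=\frac{e^{-\alpha t}}{\lambda+\alpha}\int_0^N I_0(2\sqrt{ts})\,e^{-s/(\lambda+\alpha)}\,ds.
\]
As $N\to\infty$ the $s$-integral increases to $(\lambda+\alpha)e^{t(\lambda+\alpha)}$ by the Bessel identity with $T=t$, so $0\le c_N(\lambda)\uparrow e^{\lambda t}$ monotonically. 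Consequently $\|V_N-P_t^{-1}g\|^2=\int_0^\infty (c_N(\lambda)-e^{\lambda t})^2\,d(E_\lambda g,g)\to 0$ by dominated convergence, the dominating function being $e^{2\lambda t}$, whose integrability against $d(E_\lambda g,g)$ is guaranteed by $g\in\sD(P_t^{-1})$ through Theorem \ref{t:invchar}. Hence the improper Bochner integral converges in $L^2(m)$ to $\int_0^\infty e^{\lambda t}\,dE_\lambda g=P_t^{-1}g$, which is (\ref{e:invformula}).

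The main obstacle is precisely this last point. Unlike the scalar criterion, where Tonelli applies to a nonnegative integrand with no convergence worry, the vector-valued formula sits at the borderline of absolute Bochner integrability: an asymptotic analysis of $\int_0^\infty I_0(2\sqrt{ts})\|\cJ_s^\alpha g\|\,ds$ using $I_0(x)\sim e^x/\sqrt{2\pi x}$ shows the critical growth to be governed by the very exponent $e^{2\lambda t}$ that only just fails to be controlled under the bare hypothesis $g\in\sD(P_t^{-1})$. Interpreting the Bochner integral as the $L^2$-limit of the truncations $V_N$, as above, sidesteps this difficulty entirely and lets the monotone/dominated convergence argument on the spectral side carry the proof, with the membership $g\in\sD(P_t^{-1})$ entering exactly as the integrability of the dominating envelope.
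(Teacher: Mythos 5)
Your proof is correct. The first half --- the necessary and sufficient condition --- is exactly the paper's argument: insert the spectral resolution (\ref{e:Jspec}) of $\cJ^{\alpha}_s$, apply Tonelli (legitimate since every factor is nonnegative), and evaluate the inner integral by the Laplace transform $\int_0^{\infty}I_0(2\sqrt{Ts})e^{-ps}\,ds=e^{T/p}/p$ with $T=2t$, $p=(\lambda+\alpha)^{-1}$, reducing finiteness to $\int_0^{\infty}e^{2\lambda t}\,d(E_{\lambda}g,g)<\infty$, i.e.\ to Theorem \ref{t:invchar}. Where you genuinely depart from the paper is the inversion formula, and your route is the more careful one. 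The paper proves the bound $\|\cJ^{\alpha}_s g\|\leq \|g\|/(es)$ and then identifies $\int_0^{\infty}I_0(2\sqrt{ts})\cJ^{\alpha}_s g\,ds$ with $e^{\alpha t}\int_0^{\infty}e^{\lambda t}\,dE_{\lambda}g$ by a single appeal to Fubini; but a bound decaying like $1/s$ cannot offset the growth $I_0(2\sqrt{ts})\sim e^{2\sqrt{ts}}\left(4\pi\sqrt{ts}\right)^{-1/2}$, and your suspicion that absolute Bochner integrability can genuinely fail under the bare hypothesis $g\in\sD(P_t^{-1})$ is justified: for a generator with point spectrum $\lambda_k=2^k$ and $g$ whose spectral measure puts mass $k^{-2}e^{-2t\lambda_k}$ at $\lambda_k$, one has $g\in\sD(P_t^{-1})$, yet a Laplace-method estimate near the resonance values $s\approx t(\lambda_k+\alpha)^2$ shows each such window contributes an amount bounded below by a constant multiple of $k^{-1}$ to $\int_0^{\infty}I_0(2\sqrt{ts})\|\cJ^{\alpha}_s g\|\,ds$, which therefore diverges. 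Your truncation scheme --- Fubini on $[0,N]$ (valid because the total variation of $\lambda\mapsto(E_{\lambda}g,h)$ is at most $\|g\|\,\|h\|$) to get $V_N=\int_0^{\infty}c_N(\lambda)\,dE_{\lambda}g$, the monotone convergence $0\leq c_N(\lambda)\uparrow e^{\lambda t}$, and dominated convergence with envelope $e^{2\lambda t}$ to conclude $\|V_N-P_t^{-1}g\|\rar 0$ --- interprets (\ref{e:invformula}) as an improper ($L^2$-limit) integral and thereby closes a gap that the paper's own proof leaves open. What the paper's argument buys is brevity; what yours buys is a reading of the statement that is actually watertight, at the modest cost of making the ``Bochner integral'' in the theorem an $L^2$-limit of integrals over $[0,N]$ rather than an absolutely convergent integral over $[0,\infty)$.
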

\begin{proof}
 It follows from (\ref{e:Jspec}) that
 \bean
\int_0^{\infty}I_0(2\sqrt{2ts})(\cJ^{\alpha}_s g,g) ds &=&\int_0^{\infty} ds I_0(2\sqrt{2ts}) \int_0^{\infty} \frac{1}{\lambda+\alpha}e^{-\frac{s}{\lambda+\alpha}}d(E_{\lambda}g,g)\\
&=&\int_0^{\infty} \left(\int_0^{\infty}e^{-\frac{s}{\lambda+\alpha}} I_0(2\sqrt{2ts})  ds\right)\frac{1}{\lambda+\alpha}d(E_{\lambda}g,g)\\
&=&\int_0^{\infty} e^{2t(\lambda+\alpha)} d(E_{\lambda}g,g),
\eean
which is finite if and only if $g \in \sD(P_T^{-1})$. The last line in the above follows from the Laplace transform of the modified Bessel function (see Table 29.3 in \cite{AS}).

Next observe that for $g \in L^2(m)$,
\[
\|\cJ^{\alpha}_t g\|^2=\int_0^{\infty}\frac{1}{(\lambda+\alpha)^2}e^{-\frac{2t}{\lambda +\alpha}}d(E_{\lambda}g,g)\leq\frac{1}{e^2t^2} \int_0^{\infty}d(E_{\lambda}g,g)=\frac{\|g\|^2}{e^2t^2}.
\]
Thus, using Fubini's theorem and (\ref{e:Jspec}) we deduce
\[
\int_0^{\infty}I_0(2\sqrt{ts})\cJ^{\alpha}_sgds=e^{\alpha t}\int_0^{\infty} e^{t\lambda} dE_{\lambda}g,
\]
which implies (\ref{e:invformula}).
\end{proof}
$\cJ^{\alpha}g$ can be explicitly computed if one knows the transition function of $X$. If one instead has the knowledge of the family $(U^{\alpha})$, $\cJ^{\alpha}g$ is determined as the solution of a Cauchy problem. 
\begin{theorem} \label{t:Jcauchy}
Given an $f \in L^2(m)$ there exists a unique solution to the following Cauchy problem:
\bea \label{e:Jcauchy}
\frac{d}{dt}j(t,\cdot) &=& -U^{\alpha}j(t,\cdot) \\
j(0,\cdot)& =& U^{\alpha}f. \nn
\eea
Moreover, its solution is given by $j(t,\cdot)=\cJ^{\alpha}_tf$.
\end{theorem}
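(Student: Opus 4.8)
The plan is to write down the solution explicitly through the spectral calculus, using the representation (\ref{e:Jspec}) of $\cJ^{\alpha}_t$, and then to settle uniqueness by a short energy estimate that exploits the non-negativity of $U^{\alpha}$. Existence is really just a verification, since the candidate solution is already handed to us in spectral form; the only genuine work is the uniqueness and the justification of a differentiation under the spectral integral.

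First I would check that $j(t,\cdot):=\cJ^{\alpha}_t f$ solves (\ref{e:Jcauchy}). The initial condition is immediate from (\ref{e:Jspec}): at $t=0$ the spectral density is $\frac{1}{\lambda+\alpha}$, so $\cJ^{\alpha}_0 f=\int_0^{\infty}\frac{1}{\lambda+\alpha}dE_{\lambda}f=U^{\alpha}f$. For the differential equation I would differentiate the scalar density $\frac{1}{\lambda+\alpha}e^{-t/(\lambda+\alpha)}$ in $t$, obtaining $-\frac{1}{(\lambda+\alpha)^2}e^{-t/(\lambda+\alpha)}$, which is exactly the spectral density of $-U^{\alpha}\cJ^{\alpha}_t$ (multiply the density of $\cJ^{\alpha}_t$ by the density $\frac{1}{\lambda+\alpha}$ of $U^{\alpha}$). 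The point requiring care is that this differentiation may be performed under the spectral integral, i.e. that $\frac{1}{h}(\cJ^{\alpha}_{t+h}f-\cJ^{\alpha}_t f)$ converges in $L^2(m)$ to $-U^{\alpha}\cJ^{\alpha}_t f$. In spectral form the squared norm of the discrepancy is an integral of the form $\int_0^{\infty}|\cdots|^2\,d(E_{\lambda}f,f)$ whose integrand tends to $0$ pointwise in $\lambda$; since the resolvent contributes a factor $\frac{1}{(\lambda+\alpha)^2}\le\alpha^{-2}$ while every exponential factor lies in $[0,1]$, the integrand is dominated by a constant, which is integrable against the finite measure $d(E_{\lambda}f,f)$ of total mass $\|f\|^2$. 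Dominated convergence then applies, exactly as in the argument used for Corollary \ref{c:invpde}, and existence follows.

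For uniqueness I would argue by an energy method using the boundedness and non-negativity of $U^{\alpha}$. If $j_1,j_2$ are two $L^2(m)$-solutions, set $w(t,\cdot)=j_1(t,\cdot)-j_2(t,\cdot)$, so that $\frac{d}{dt}w=-U^{\alpha}w$ and $w(0,\cdot)=0$. Because $w$ is differentiable in $L^2(m)$ and the inner product is real and symmetric, $t\mapsto\|w(t,\cdot)\|^2$ is differentiable with $\frac{d}{dt}\|w(t,\cdot)\|^2=2\big(\tfrac{d}{dt}w,\,w\big)=-2(U^{\alpha}w,w)\le 0$, the last inequality being the non-negativity of $U^{\alpha}$ read off from its spectral representation. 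Hence $\|w(t,\cdot)\|^2$ is non-increasing and starts at $0$, forcing $w\equiv 0$. Alternatively, since $U^{\alpha}$ is a bounded operator it generates the uniformly continuous semigroup $e^{-tU^{\alpha}}$ and the Cauchy problem has the unique solution $e^{-tU^{\alpha}}U^{\alpha}f$; but the direct energy estimate is self-contained and slightly cleaner.

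The main, and essentially only, obstacle is the justification of differentiation under the spectral integral; everything else is formal spectral calculus together with the elementary energy inequality. Fortunately this obstacle is mild precisely because $U^{\alpha}=\int_0^{\infty}(\lambda+\alpha)^{-1}dE_{\lambda}$ is \emph{bounded}: the troublesome large-$\lambda$ growth that complicates the unbounded generator $A$ in Corollary \ref{c:invpde} is absent here, and the dominating function reduces to a constant. I would therefore expect this step to be a short dominated-convergence argument rather than a delicate estimate, and the theorem to follow quickly once the two pieces are assembled.
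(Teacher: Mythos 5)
Your proof is correct, but it diverges from the paper's argument in both halves, most notably in the uniqueness step. For existence, the paper also starts from the spectral representation (\ref{e:Jspec}), but rather than differentiating under the spectral integral it uses Fubini to verify the \emph{integrated} equation
\[
\int_0^t U^{\alpha}j(s,\cdot)\,ds \;=\; \int_0^{\infty}\frac{1}{\lambda+\alpha}\Bigl(1-e^{-\frac{t}{\lambda+\alpha}}\Bigr)dE_{\lambda}f \;=\; U^{\alpha}f-j(t,\cdot),
\]
which sidesteps the difference-quotient analysis entirely; your dominated-convergence justification (difference quotients bounded by a constant via the mean value theorem, since $(\lambda+\alpha)^{-2}\le\alpha^{-2}$, integrated against the finite measure $d(E_{\lambda}f,f)$) is a valid substitute and proves the slightly stronger statement that the $L^2$-derivative exists pointwise in $t$ directly. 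For uniqueness, the paper applies $E_{\lambda}$ to the equation satisfied by $j=j_1-j_2$, obtains the integral equation $E_{\lambda}j(t,\cdot)=-\int_0^t\int_0^{\lambda}\frac{1}{\mu+\alpha}\,dE_{\mu}j(s,\cdot)\,ds$ on each spectral band, and concludes $E_{\lambda}j\equiv 0$ for every $\lambda$ (implicitly a Gronwall estimate, where the operator restricted to the band has norm at most $1/\alpha$); your energy method --- $\frac{d}{dt}\|w\|^2=-2(U^{\alpha}w,w)\le 0$ with $\|w(0)\|=0$ --- replaces this with the non-negativity of the self-adjoint operator $U^{\alpha}$ and needs neither spectral projections nor Gronwall, so it is more elementary and self-contained. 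What the paper's route buys is uniformity of technique (the same ``apply $E_{\lambda}$ and solve band-by-band'' device is used in the proof of Corollary \ref{c:invpde}); what yours buys is brevity and independence from the spectral family in the uniqueness step. Your parenthetical alternative --- uniqueness from the uniformly continuous semigroup $e^{-tU^{\alpha}}$ generated by the bounded operator $-U^{\alpha}$ --- is exactly the observation the paper records immediately afterwards as Corollary \ref{c:picard}, so it is fully consistent with the paper's framework as well.
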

\begin{proof}
Let $j(t,\cdot)=\cJ^{\alpha}_tf$ and observe using (\ref{e:Jspec}) that
\[
j(t,\cdot)=\int_0^{\infty}\frac{1}{\lambda+\alpha}e^{-\frac{t}{\lambda+\alpha}}dE_{\lambda}f.
\]
Thus,
\[
U^{\alpha}j(t,\cdot)=\int_0^{\infty}\frac{1}{(\lambda+\alpha)^2}e^{-\frac{t}{\lambda+\alpha}}dE_{\lambda}f.
\]
In view of the Fubini's theorem
\[
\int_0^t U^{\alpha}j(s,\cdot)ds=\int_0^{\infty}\frac{1}{\lambda+\alpha}\left(1-e^{-\frac{t}{\lambda+\alpha}}
\right)dE_{\lambda}f=U^{\alpha}f-j(t,\cdot),
\]
which verifies that $\cJ^{\alpha}_tf$ solves (\ref{e:Jcauchy}) since 
\[
\cJ^{\alpha}_0f=\int_0^{\infty}\frac{1}{\lambda+\alpha}dE_{\lambda}f=U^{\alpha}f.
\]
To show the uniqueness let us suppose $j_1$ and $j_2$ are two solutions of (\ref{e:Jcauchy}) and set $j=j_1-j_2$. 
Note that
\[
U^{\alpha}E_{\lambda}j(t,\cdot)= \int_0^{\lambda}\frac{1}{\mu+\alpha}dE_{\mu}j(t,\cdot).
\]
Since $j$ solves (\ref{e:Jcauchy}) with the initial condition $0$, applying $E_{\lambda}$ to both sides of the equality we obtain 
\[
E_{\lambda}j(t,\cdot)=-\int_0^t ds\int_0^{\lambda}\frac{1}{\mu+\alpha}dE_{\mu}j(s,\cdot),
\]
which yields $E_{\lambda}j(t,\cdot)=0$ for all $\lambda \geq 0$. This completes the proof.
\end{proof} 
Since $-U^{\alpha}$ is a non-positive bounded operator, it generates a uniformly continuous semi-group, $T_t:=e^{-t U^{\alpha}}$. Thus, we have the following immediate corollary.
\begin{corollary} \label{c:picard} Let $(T_t)$ be the semigroup on $L^2(m)$ generated by $U^{\alpha}$. Then,
\[
\cJ^{\alpha}_t f= T_t U^{\alpha}f.
\]
\end{corollary}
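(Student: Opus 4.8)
The plan is to read this as an immediate consequence of the uniqueness clause of Theorem \ref{t:Jcauchy}. I will show that $t \mapsto T_t U^{\alpha} f$ solves the very same Cauchy problem (\ref{e:Jcauchy}) whose unique solution has already been identified as $\cJ^{\alpha}_t f$, and then invoke that uniqueness to conclude.

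First I would recall the elementary theory behind the semigroup $T_t = e^{-tU^{\alpha}}$. Because $-U^{\alpha}$ is a bounded operator, $T_t$ is norm-differentiable in $t$, and for every $h \in L^2(m)$ the map $t \mapsto T_t h$ satisfies $\frac{d}{dt} T_t h = -U^{\alpha} T_t h$ with $T_0 h = h$, the derivative being taken in $L^2(m)$. This is exactly the statement that $-U^{\alpha}$ is the infinitesimal generator of the uniformly continuous semigroup $T_t$.

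Next I would specialise to $h = U^{\alpha} f$ and set $j(t,\cdot) := T_t U^{\alpha} f$. Then $j(0,\cdot) = U^{\alpha} f$, which matches the initial datum of (\ref{e:Jcauchy}). Moreover, since $U^{\alpha}$ commutes with $T_t$ (both are functions of the same generator, hence of the same spectral family $E_{\lambda}$), differentiating gives $\frac{d}{dt} j(t,\cdot) = -U^{\alpha} T_t U^{\alpha} f = -U^{\alpha} j(t,\cdot)$. Thus $j$ solves (\ref{e:Jcauchy}), and the uniqueness proved in Theorem \ref{t:Jcauchy} forces $T_t U^{\alpha} f = \cJ^{\alpha}_t f$.

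There is essentially no genuine obstacle here; the only point requiring a moment's care is confirming that $U^{\alpha}$ commutes with $T_t$, so that the differentiated equation indeed takes the required form $-U^{\alpha} j$ rather than merely $-T_t U^{\alpha} U^{\alpha} f$. As an independent sanity check one may verify the identity directly from spectral calculus: since $U^{\alpha} = \int_0^{\infty} (\lambda+\alpha)^{-1}\, dE_{\lambda}$, one has $T_t = e^{-tU^{\alpha}} = \int_0^{\infty} e^{-t/(\lambda+\alpha)}\, dE_{\lambda}$, and composing with $U^{\alpha}$ reproduces precisely the spectral representation (\ref{e:Jspec}) of $\cJ^{\alpha}_t$. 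Either route closes the argument.
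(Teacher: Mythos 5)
Your proof is correct and matches the paper's intent exactly: the paper presents this as an ``immediate corollary'' of Theorem \ref{t:Jcauchy}, and your argument---verify that $t \mapsto T_t U^{\alpha}f$ solves the Cauchy problem (\ref{e:Jcauchy}) via the generator property of the uniformly continuous semigroup $T_t = e^{-tU^{\alpha}}$, then invoke the uniqueness established in that theorem---is precisely the argument being left implicit. Your spectral-calculus sanity check, composing $T_t = \int_0^{\infty} e^{-t/(\lambda+\alpha)}\,dE_{\lambda}$ with $U^{\alpha}$ to recover (\ref{e:Jspec}), is a valid independent confirmation but not a departure from the paper's route.
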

The fact that $\cJ^{\alpha}_tf$ is a solution of a Cauchy problem with a bounded generator also implies that one can compute  it using a Picard iteration.
\begin{corollary} Suppose $f \in L^2(m)$ and set
\bean
j_0(t,\cdot)&=&U^{\alpha }f,\\
j_{n+1}(t,\cdot)&=&U^{\alpha} f-\int_0^t U^{\alpha}j_n(s,\cdot)ds.
\eean
Then, $(j_n(\cdot,\cdot))_{n \geq 0}$ converges uniformly in $L^2( m)$ to $(\cJ^{\alpha}_sf)_{s \in [0,t]}$ for any $t>0$, i.e. 
\[
\lim_{n \rar \infty} \sup_{0\leq s \leq t} \|j_{n}(s,\cdot)-\cJ^{\alpha}_sf\|=0, \qquad \forall t>0.
\]
\end{corollary}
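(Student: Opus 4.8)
The plan is to recognise this statement as the textbook Picard iteration for the linear evolution equation of Theorem~\ref{t:Jcauchy}, whose generator $U^{\alpha}$ is \emph{bounded}, and to extract the convergence rate directly rather than invoking an abstract fixed-point theorem. The one structural fact I would isolate first is that the target $s\mapsto\cJ^{\alpha}_sf$ solves the same integral equation as the fixed point of the iteration. Integrating the ODE in Theorem~\ref{t:Jcauchy} from $0$ to $t$ and using $\cJ^{\alpha}_0f=U^{\alpha}f$ gives
\[
\cJ^{\alpha}_tf=U^{\alpha}f-\int_0^t U^{\alpha}\cJ^{\alpha}_sf\,ds,
\]
which has exactly the form of the recursion defining $j_{n+1}$.

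Before the estimate I would record two quantitative ingredients from the spectral calculus. Since $U^{\alpha}=\int_0^{\infty}\frac{1}{\lambda+\alpha}\,dE_{\lambda}$ has spectrum contained in $[0,1/\alpha]$, we have $\|U^{\alpha}\|\leq 1/\alpha$; and since $T_s=e^{-sU^{\alpha}}$ is a contraction (Corollary~\ref{c:picard}, using $U^{\alpha}\geq 0$), we get $\|\cJ^{\alpha}_sf\|=\|T_sU^{\alpha}f\|\leq\|U^{\alpha}f\|\leq\|f\|/\alpha$ uniformly in $s$. Hence the initial error $e_0(s):=j_0(s,\cdot)-\cJ^{\alpha}_sf=U^{\alpha}f-\cJ^{\alpha}_sf$ satisfies the uniform bound $M:=\sup_{s\geq 0}\|e_0(s)\|\leq 2\|f\|/\alpha<\infty$.

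The core step is the error recursion. Writing $e_n(s):=j_n(s,\cdot)-\cJ^{\alpha}_sf$ and subtracting the two integral equations yields
\[
e_{n+1}(s)=-\int_0^s U^{\alpha}e_n(r)\,dr,\qquad\mbox{so that}\qquad\|e_{n+1}(s)\|\leq\frac1\alpha\int_0^s\|e_n(r)\|\,dr.
\]
A straightforward induction on $n$ then gives the Gr\"onwall-type bound $\|e_n(s)\|\leq M\,(s/\alpha)^n/n!$ for every $s\geq 0$; taking the supremum over $s\in[0,t]$ delivers $\sup_{0\leq s\leq t}\|j_n(s,\cdot)-\cJ^{\alpha}_sf\|\leq M\,(t/\alpha)^n/n!\rar 0$, which is precisely the asserted uniform convergence. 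Equivalently, the same induction identifies $j_n(t,\cdot)=\big(\sum_{k=0}^n(-tU^{\alpha})^k/k!\big)U^{\alpha}f$ as the $n$-th partial sum of the norm-convergent exponential series for $T_tU^{\alpha}f=\cJ^{\alpha}_tf$, whose tail is dominated by $(\|f\|/\alpha)\sum_{k>n}(t/\alpha)^k/k!$.

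There is no genuine obstacle here: boundedness of $U^{\alpha}$ legitimises every Bochner-integral manipulation and reduces the matter to elementary real analysis. The only points deserving care are that the supremum defining $M$ is finite, which is supplied by the contraction estimate on $T_s$, and that it is the factorial $n!$ in the denominator---rather than merely Gr\"onwall's inequality with an exponential constant---that secures convergence simultaneously over all of $[0,t]$.
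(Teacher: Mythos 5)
Your proposal is correct and follows essentially the same route as the paper's proof: integrate the Cauchy problem of Theorem~\ref{t:Jcauchy} to see that $\cJ^{\alpha}_sf$ satisfies the same integral equation, use $\|U^{\alpha}\|\leq 1/\alpha$ to get the error recursion, and close with the factorial induction $\sup_{0\leq s\leq t}\|j_n(s,\cdot)-\cJ^{\alpha}_sf\|\leq M\,t^n/(\alpha^n n!)$. The only (immaterial) difference is the constant $M$: you bound the initial error by $2\|f\|/\alpha$ via the contraction property of $T_s=e^{-sU^{\alpha}}$, whereas the paper estimates $\int_0^s\|U^{\alpha}\cJ^{\alpha}_rf\|\,dr$ spectrally to get $\|f\|$.
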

\begin{proof} 
Let $j(s,\cdot)=\cJ^{\alpha}_sf$ and observe from (\ref{e:Jcauchy}) that 
\[
j(s,\cdot)=U^{\alpha}f -\int_0^s U^{\alpha}j(r,\cdot)dr.
\]
Moreover,
\bea 
\|j_0(s,\cdot)-j(s,\cdot)\| &\leq & \int_0^s \| U^{\alpha}j(r,\cdot)\|dr = \int_0^s \sqrt{\int_0^{\infty} \frac{1}{(\lambda+\alpha)^2}e^{-\frac{2r}{\lambda +\alpha}}d(E_{\lambda}f,f)}dr \nn\\
&\leq&\int_0^s \frac{e^{- \frac{r}{\alpha}}}{\alpha}\|f\|dr \leq \|f\|. \label{e:picest0}
\eea
Thus,
\[
 \|j_{n+1}(s,\cdot)-j(s,\cdot)\|\leq\int_0^s\|U^{\alpha}j_n(r,\cdot)-U^{\alpha}j(r,\cdot)\|dr\leq \frac{1}{\alpha} \int_0^s\|j_n(r,\cdot)-j(r,\cdot)\|dr.
\]
Hence, 
\[
\sup_{0\leq s \leq t} \|j_{n+1}(s,\cdot)-j(s,\cdot)\|\leq \frac{1}{\alpha} \int_0^t \|j_n(r,\cdot)-j(r,\cdot)\|dr
\]
and we deduce by induction that
\[
\sup_{0\leq s \leq t} \|j_{n}(s,\cdot)-j(s,\cdot)\|\leq \frac{t^n	}{\alpha^n n!} \sup_{0\leq s \leq t} \|j_0(s,\cdot)-j(s,\cdot)\|.
\]
In conjunction with (\ref{e:picest0}) this leads to the estimate 
\[
\sup_{0\leq s \leq t} \|j_{n}(s,\cdot)-j(s,\cdot)\|\leq \frac{t^n	}{\alpha^n n!} \|f\|,
\]
which yields the claim. 
\end{proof}

Although it is difficult to predict the tail behaviour of $(\cJ^{\alpha}_tf,f)$ as $t \rar \infty$ due to the oscillatory nature of the Bessel functions of the first kind, the Laplace transform of $(\cJ^{\alpha}_tf,g)$ is a familiar object. Thus the tail behaviour can be determined by inverting this Laplace transform as well. 
\begin{proposition} Suppose $f \in L^2(m)$. Then for all $s \geq 0$
\[
\int_0^{\infty} e^{-st} (\cJ^{\alpha}_tf,f)dt= \frac{1}{s}U^{\alpha+\frac{1}{s}}(f,f).
\]
\end{proposition}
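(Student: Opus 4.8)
The plan is to push everything onto the spectral side and reduce the claim to an elementary Laplace transform computed frequency-by-frequency. By Proposition~\ref{p:J}, in particular the resolution (\ref{e:Jspec}), I start from the representation
\[
(\cJ^{\alpha}_t f, f) = \int_0^{\infty} \frac{1}{\lambda+\alpha} e^{-\frac{t}{\lambda+\alpha}} \, d(E_{\lambda} f, f),
\]
where $d(E_{\lambda} f, f)$ is a finite non-negative measure on $[0,\infty)$ of total mass $\|f\|^2$ (by $\lim_{\lambda \rar \infty}(E_\lambda f,f)=\|f\|^2$). I would then substitute this into the left-hand side, multiply by $e^{-st}$, and integrate in $t$.

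The first and essentially only analytic step is to interchange the order of the $t$-integration and the spectral integration. Since the integrand $e^{-st}\frac{1}{\lambda+\alpha}e^{-t/(\lambda+\alpha)}$ is non-negative for $s \geq 0$ and $d(E_\lambda f, f)$ is a non-negative measure, Tonelli's theorem applies with no further estimate, so the interchange is costless; this is the only place where care is needed, and the difficulty dissolves immediately by positivity. After the interchange the inner integral is elementary:
\[
\int_0^{\infty} e^{-\left(s + \frac{1}{\lambda+\alpha}\right)t} \, dt = \frac{1}{\, s + \frac{1}{\lambda+\alpha}\,}, \qquad s>0.
\]

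It then remains to simplify the algebraic factor and recognise the resolvent. Writing $a = \lambda + \alpha$, one has
\[
\frac{1}{a}\cdot\frac{1}{\, s + \frac{1}{a}\,} = \frac{1}{\, a s + 1\,} = \frac{1}{s}\cdot\frac{1}{\lambda + \alpha + \frac{1}{s}},
\]
so that
\[
\int_0^{\infty} e^{-st} (\cJ^{\alpha}_t f, f)\, dt = \frac{1}{s} \int_0^{\infty} \frac{1}{\lambda + \left(\alpha+\frac{1}{s}\right)}\, d(E_\lambda f, f).
\]
Identifying the last integral as $(U^{\alpha+\frac{1}{s}}f, f)$ through the spectral representation $U^{\beta} = \int_0^{\infty} \frac{1}{\lambda+\beta}\,dE_\lambda$ of the resolvent, valid here for $\beta = \alpha + \frac{1}{s} > 0$, yields the stated identity for every $s>0$.

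Finally, for the boundary value $s=0$ the prefactor $\frac{1}{s}$ must be read as a limit. Letting $s \downarrow 0$ in the right-hand side gives $\frac{1}{s}\int_0^{\infty}\frac{1}{\lambda+\alpha+1/s}\,d(E_\lambda f,f)\rar\int_0^{\infty}d(E_\lambda f,f)=\|f\|^2$, which matches the direct evaluation of the left-hand side at $s=0$, where the inner $t$-integral of $\frac{1}{\lambda+\alpha}e^{-t/(\lambda+\alpha)}$ equals $1$ and hence the whole expression collapses to $\|f\|^2$. Thus the identity extends continuously to $s=0$ and no argument beyond this limiting interpretation is required.
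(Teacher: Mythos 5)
Your proof is correct and follows essentially the same route as the paper: substitute the spectral resolution (\ref{e:Jspec}), interchange the $t$-integral with the spectral integral (the paper does this tacitly, you justify it by Tonelli), evaluate the elementary Laplace transform $\int_0^\infty e^{-(s+\frac{1}{\lambda+\alpha})t}dt$, and simplify to recognise the resolvent $U^{\alpha+\frac{1}{s}}$. Your treatment of $s=0$ as a limiting case also matches the paper's, which notes the identity there via $\beta U^{\beta}f \rar f$ as $\beta \rar \infty$ together with $\int_0^{\infty}(\cJ^{\alpha}_tf,f)dt=(f,f)$.
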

\begin{proof}
Using the spectral representation of $\cJ^{\alpha}$
\bean
\int_0^{\infty} e^{-st} (\cJ^{\alpha}_tf,f)dt&=& \int_0^{\infty} e^{-st}\int_0^{\infty} \frac{1}{\lambda+\alpha}e^{-\frac{t}{\lambda+\alpha}}d(E_{\lambda}f,f)\\
&=&\int_0^{\infty} \frac{1}{\lambda+\alpha}\frac{1}{s+ \frac{t}{\lambda+\alpha}}d(E_{\lambda}f,f)\\
&=&\frac{1}{s}\int_0^{\infty} \frac{1}{\lambda+\alpha+\frac{1}{s}}d(E_{\lambda}f,f)=\frac{1}{s}U^{\alpha+\frac{1}{s}}(f,f).
\eean
Also observe that the above identity is valid for $s=0$ since $\alpha U^{\alpha}f \rar f$ as $ \alpha \rar \infty$ and
\[
\int_0^{\infty} (\cJ^{\alpha}_tf,f)dt=(f,f).
\]
\end{proof}
 When $X$ is a one-dimensional transient diffusion we have yet another way of characterising $\cJ^{\alpha}$.
\begin{proposition} Suppose that $X$ is as in Example \ref{ex:1dim} and is transient. Let $G_x:=\sup\{t \geq 0: X_t=x\}$ be the last hitting time of $x$. Then
\[
\cJ^{\alpha}f(x) = u(x,x)E^{\mu}\left(J_0(2 \sqrt{tG_x}) e^{-\alpha G_x}\chf_{[G_x>0]}\right),
\]
where $\mu$ is a measure on $(\bfE,\sE)$ given by $\mu(dy)=f(y)m(dy)$, and $u$ is the potential kernel for $X$, i.e.
\[
u(x,y)=\int_0^{\infty}p(t,x,y)dt.
\]
\end{proposition}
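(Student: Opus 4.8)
The plan is to reduce the statement to the definition of $\cJ^\alpha_t$ together with a single classical probabilistic identity, the density of the last exit time of a transient regular one-dimensional diffusion. Note first that the parameter $t$ suppressed on the left-hand side is the same one appearing inside $J_0(2\sqrt{tG_x})$ on the right, so I read the claim as an identity for $\cJ^\alpha_t f(x)$.

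First I would unfold the definition (\ref{e:Jdef}) against the transition density. Since $X$ is as in Example \ref{ex:1dim}, it has a symmetric density $p(t,\cdot,\cdot)$ with respect to $m$, whence $P_sf(x)=\int_\bfE p(s,x,y)f(y)\,m(dy)=\int_\bfE p(s,x,y)\,\mu(dy)$. Substituting and interchanging the $ds$- and $\mu(dy)$-integrations via Fubini gives
\[
\cJ^\alpha_t f(x)=\int_\bfE\left(\int_0^\infty J_0(2\sqrt{ts})e^{-\alpha s}p(s,x,y)\,ds\right)\mu(dy).
\]
The interchange is legitimate because $|J_0|\le 1$ and $\int_0^\infty e^{-\alpha s}p(s,x,y)\,ds=u^\alpha(x,y)$, the $\alpha$-potential density, is $\mu$-integrable in $y$; indeed $\int_\bfE u^\alpha(x,y)|f(y)|\,m(dy)=U^\alpha|f|(x)$, which is finite for $f\in L^2(m)$. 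I would spell out this bound.

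The heart of the argument is the last-exit identity
\[
P^y(G_x\in ds,\,G_x>0)=\frac{p(s,y,x)}{u(x,x)}\,ds,\qquad s>0,
\]
valid for a transient regular diffusion, where $u(x,x)=\int_0^\infty p(s,x,x)\,ds<\infty$ by transience. I would either invoke the last-exit decomposition for one-dimensional diffusions (\cite{MK}, \cite{IK}) or sketch it through the local time $L^x$ at $x$, whose total mass governs the event $\{G_x>0\}=\{T_x<\infty\}$. As a consistency check I would verify the total mass, $\int_0^\infty p(s,y,x)/u(x,x)\,ds=u(x,y)/u(x,x)=P^y(T_x<\infty)=P^y(G_x>0)$, which is the standard hitting-probability formula in terms of the Green function. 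Using the symmetry $p(s,x,y)=p(s,y,x)$, this identity turns the inner bracket above into
\[
u(x,x)\int_0^\infty J_0(2\sqrt{ts})e^{-\alpha s}\,P^y(G_x\in ds,\,G_x>0)=u(x,x)\,E^y\!\left(J_0(2\sqrt{tG_x})e^{-\alpha G_x}\chf_{[G_x>0]}\right).
\]

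Finally I would integrate in $y$ against $\mu$ and use $E^\mu(\cdot)=\int_\bfE E^y(\cdot)\,\mu(dy)$ to obtain the stated formula. The main obstacle is the last-exit identity itself: establishing the absolute continuity of the law of $G_x$ on $(0,\infty)$ with the displayed density, and correctly isolating the atom at $\{G_x=0\}$ (equivalently $\{T_x=\infty\}$), which is exactly what the indicator $\chf_{[G_x>0]}$ removes — on that set, invisible to the density, the integrand would otherwise contribute $J_0(0)=1$. Everything else is bookkeeping: the Fubini justification and the identification $p(s,x,y)=p(s,y,x)$.
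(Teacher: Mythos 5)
Your proposal is correct and follows essentially the same route as the paper: both arguments rest on the classical last-exit-time density $P^y(0<G_x\leq t)=\int_0^t p(s,x,y)/u(x,x)\,ds$ (the paper cites Borodin--Salminen, p.~27), combined with the symmetry $p(s,x,y)=p(s,y,x)$, integration against $\mu$, and the choice $h(s)=J_0(2\sqrt{ts})e^{-\alpha s}$ in the resulting identity for $E^{\mu}h(G_x)\chf_{[G_x>0]}$. Your additional care with the Fubini justification and the atom at $\{G_x=0\}$ is detail the paper leaves implicit, not a different method.
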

\begin{proof}
It is well-known that (see, e.g., p.27 of \cite{BorSal}) 
\[
P^y(0<G_y\leq t)=\int_0^t \frac{p(s,x,y)}{u(y,y)}ds.
\]
In view of the symmetry of $p(t,x,y)$ the above implies for all bounded and continuous $h$ that
\[
E^{\mu} h(G_x)\chf_{[G_x>0]}=\int_0^{\infty} h(s)\frac{P_sf(x)}{u(x,x)}ds,
\]
which yields the claim.
\end{proof}
Recall (see Chapter 9 of \cite{AS}) that $J_0$ satisfies the following ODE:
\be \label{e:J0ode}
x^2 J_0''+ x J_0' + x^2 J_0=0.
\ee
The above equation and its connection with $2$-dimensional Bessel process leads to the following remarkable observation that $\cJ^{\alpha}_t$ can be considered as the solution of a backward partial differential equation with an initial condition.
\begin{proposition}
Fix an $f \in L^2(m)$, $T>0$ and consider the following function
\be \label{e:h}
h(t,x)= \int_0^{\infty} J_0(2\sqrt{xs}) e^{-2(T-t)s}(P_sf,f)ds, \qquad x \geq 0,\,  t\in [0,T).
\ee
Then
\bea
h_t + 2x h_{xx} + 2 h_x &=&0; \label{e:pdeh}\\
h(0,\cdot)&=&(\cJ^T_x f,f).\nn
\eea
Moreover, $(h(t,X_t))_{t \in [0,S]}$ is a bounded martingale for any $S<T$ when $X$ is a $2$-dimensional squared Bessel process, i.e. $X$ is the unique weak solution
\[
dX_t =2\sqrt{X_t}dW_t +2 dt,
\]
where $W$ is a standard Brownian motion.
\end{proposition}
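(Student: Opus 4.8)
The plan is to first reduce $h$ to a transparent closed form by integrating out the variable $s$, and then to read off both the PDE and the martingale property from that form. I would insert the spectral representation $(P_sf,f)=\int_0^\infty e^{-\lambda s}\,d(E_\lambda f,f)$ into (\ref{e:h}). Since $|J_0|\le 1$ and, for $t\in[0,T)$, the weight $e^{-2(T-t)s}$ decays exponentially, the resulting double integral is absolutely convergent: its modulus is dominated by $\int_0^\infty\frac{1}{2(T-t)+\lambda}\,d(E_\lambda f,f)\le \frac{\|f\|^2}{2(T-t)}$. Hence Fubini applies, and using the Laplace transform $\int_0^\infty J_0(2\sqrt{xs})e^{-ps}\,ds=\frac1p e^{-x/p}$ (Table 29.2 in \cite{AS}) with $p=2(T-t)+\lambda$ I obtain the closed form
\[
h(t,x)=\int_0^\infty \frac{1}{q_\lambda(t)}\,e^{-x/q_\lambda(t)}\,d(E_\lambda f,f),\qquad q_\lambda(t):=2(T-t)+\lambda .
\]

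For the PDE I would verify that the kernel $\psi(t,x):=q^{-1}e^{-x/q}$, $q=q_\lambda(t)$, solves $\psi_t+2x\psi_{xx}+2\psi_x=0$ for every fixed $\lambda\ge 0$. A short computation gives $\psi_x=-q^{-2}e^{-x/q}$, $\psi_{xx}=q^{-3}e^{-x/q}$, and, since $\partial_t q=-2$, $\psi_t=2q^{-2}e^{-x/q}(1-x/q)$; substituting shows $2x\psi_{xx}+2\psi_x=-2q^{-2}e^{-x/q}(1-x/q)=-\psi_t$. Because for $t\in[0,S]$ with $S<T$ one has $q_\lambda(t)\ge 2(T-S)>0$, all $t$- and $x$-derivatives of $\psi$ are bounded uniformly in $\lambda$ (each is $q^{-k}e^{-x/q}$ times a bounded function of $x/q$), so differentiation under the integral is legitimate and (\ref{e:pdeh}) follows by integrating against the finite measure $d(E_\lambda f,f)$. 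Equivalently — and this is the ``remarkable'' connection alluded to — one may verify the PDE directly from the definition: the Bessel ODE (\ref{e:J0ode}) gives $2x\partial_{xx}J_0(2\sqrt{xs})+2\partial_xJ_0(2\sqrt{xs})=-2sJ_0(2\sqrt{xs})$, i.e. for each fixed $s$ the map $x\mapsto J_0(2\sqrt{xs})$ is an eigenfunction of $\cL:=2x\partial_{xx}+2\partial_x$ with eigenvalue $-2s$, while $\partial_t e^{-2(T-t)s}=2s\,e^{-2(T-t)s}$ produces exactly the cancelling $h_t$. The initial condition is obtained by setting $t=0$ in the closed form and comparing the resulting $\int_0^\infty\frac{1}{2T+\lambda}e^{-x/(2T+\lambda)}\,d(E_\lambda f,f)$ with the spectral resolution (\ref{e:Jspec}) of $\cJ^\alpha$.

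For the martingale statement, note that $\cL=2x\partial_{xx}+2\partial_x$ is precisely the generator of the squared Bessel process of dimension $2$: for $dX_t=2\sqrt{X_t}\,dW_t+2\,dt$ the diffusion coefficient $\sigma^2(x)=4x$ gives $\tfrac12\sigma^2\partial_{xx}+2\partial_x=\cL$. The closed form shows $h\in C^{1,2}([0,S]\times[0,\infty))$ with bounded derivatives, so Itô's formula yields
\[
dh(t,X_t)=\big(h_t+2X_t h_{xx}+2h_x\big)(t,X_t)\,dt+2\sqrt{X_t}\,h_x(t,X_t)\,dW_t,
\]
whose drift vanishes by (\ref{e:pdeh}); thus $h(t,X_t)$ is a local martingale on $[0,S]$. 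Finally $h$ is bounded on $[0,S]\times[0,\infty)$: since $q_\lambda(t)\ge 2(T-S)$ and $e^{-x/q}\le1$, the closed form gives $|h(t,x)|\le \frac{\|f\|^2}{2(T-S)}$. A bounded local martingale is a true martingale, which proves the claim.

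The main obstacle is analytic rather than conceptual: it lies in justifying the interchange of integrals (Fubini, via $|J_0|\le1$ and the exponential weight) and the differentiation under the integral sign for $h_t,h_x,h_{xx}$, both of which degenerate as $t\uparrow T$ and are rescued only by restricting to $[0,S]$ with $S<T$, where $q_\lambda(t)$ stays bounded away from $0$. This same uniform lower bound on $q_\lambda$ simultaneously delivers the $C^{1,2}$-regularity and the uniform boundedness needed to pass from a local to a genuine martingale, so keeping track of it throughout is the one point requiring care.
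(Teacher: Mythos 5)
Your proof is correct, but your primary route differs from the paper's. The paper works directly with the definition (\ref{e:h}): it justifies differentiation under the integral sign (using $|J_0|\leq 1$, the boundedness of $J_0'(x)/x$, and the ODE (\ref{e:J0ode}) to control the second derivative on compacts) and then observes that the Bessel ODE makes the integrand of $h_t+2xh_{xx}+2h_x$ vanish identically --- exactly the eigenfunction computation you relegate to your ``equivalently'' aside. You instead integrate out $s$ first: inserting the spectral representation of $(P_sf,f)$ and using the Laplace transform $\int_0^{\infty}J_0(2\sqrt{xs})e^{-ps}ds=p^{-1}e^{-x/p}$ (the same identity the paper uses to prove (\ref{e:Jspec})), you obtain the closed form $h(t,x)=\int_0^{\infty}q_{\lambda}(t)^{-1}e^{-x/q_{\lambda}(t)}d(E_{\lambda}f,f)$ and verify the PDE on the explicit kernel. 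What your route buys: the closed form makes the $C^{1,2}$ regularity, the nonnegativity, and the uniform bound $|h|\leq \|f\|^2/(2(T-S))$ on $[0,S]$ transparent, so the bounded-local-martingale step is immediate; what the paper's route buys is that it avoids the spectral theorem and Fubini altogether, bounding $h$ by $(U^{2(T-S)}f,f)$ straight from $|J_0|\leq 1$ and $(P_sf,f)\geq 0$ and so staying at the level of the transition function. The It\^{o}/bounded-martingale step is identical in both. One small point your computation makes explicit: at $t=0$ the closed form identifies $h(0,\cdot)$ with $(\cJ^{2T}_x f,f)$ via (\ref{e:Jspec}), which is what the definition (\ref{e:Jdef}) actually yields, so the superscript $T$ in the stated initial condition is evidently a typo that the paper's ``by the definition of $h$'' glosses over.
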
 
\begin{proof}
First note that $|J_0|<1$ and $\frac{J_0'(x)}{x}$ is bounded on $[0,\infty)$. The latter implies that $\frac{d}{dx}J_0(2\sqrt{xs})$ is bounded whenever $s$ belongs to a bounded interval. In view of (\ref{e:J0ode}) these observations further yield that $\frac{d^2}{dx^2}J_0(2\sqrt{xs})$ is bounded when $(x,s)$ belong to compact squares. Thus, we can differentiate  under the integral sign in (\ref{e:h}) to get
\bean
&&h_t + 2x h_{xx} + 2 h_x\\
&=&\int_0^{\infty} \left\{2x \frac{d^2}{dx^2}J_0(2\sqrt{xs})+2 \frac{d}{dx}J_0(2\sqrt{xs}) + 2sJ_0(2\sqrt{xs})\right\}e^{-2(T-t)s}(P_sf,f)ds.
\eean
However, (\ref{e:J0ode}) implies that the term within the curly brackets vanishes. Moreover, $h(0,\cdot) = (\cJ^T_x f,f)$ by the definition of $h$. This completes the proof that $h$ solves the PDE in (\ref{e:pdeh}).

To finish the proof note that $(h(t,X_t))_{t \in [0,S]}$ is a local martingale by an application of Ito's formula. Moreover, for any $t\leq S$
\[
0\leq h(t,X_t)\leq \int_0^{\infty} e^{-2(T-S)s}(P_sf,f)ds= (U^{2(T-S)}f,f)<\infty,
\]
which in turn yields that $(h(t,X_t))_{t \in [0,S]}$ is a bounded martingale. 
\end{proof}
\section{Regularisation of the inverse problem}
Regularisation of inverse problems are in principle  perturbations of the forward operator so that its inverse becomes a bounded operator on the underlying Hilbert space.  As a bounded operator the perturbed  inverse operator can then be applied to any member of the Hilbert space. If the perturbation is small, one expects not to deviate too much from the solution of the original inverse problem, if exists. We refer the reader to \cite{ehn} for an exhaustive treatment of regularisation methods for inverse problems.

The most common method for regularising ill-posed inverse problems is the {\em Tikhonov regularisation}. In our set up this will correspond to the solution of an auxiliary problem
\[
P_t f +\gamma f = g, \qquad \gamma>0,\, g\in L^2(m).
\]
Using spectral calculus it can be formally showed that the inverse of $P_t +\gamma I$ is given by
\[
\int_0^{\infty}\frac{1}{\gamma +e^{-\lambda t}}dE_{\lambda}.
\]
Since $\gamma +e^{-\lambda t}$ is bounded away from $0$, this inverse operator is bounded and, therefore, has all of $L^2(m)$ as its domain. 

In view of the above heuristic discussion we shall next describe a family of perturbations of the original problem that results in a regularisation. The resulting problems can be viewed as a mixture of the original inverse problem with a suitable regularising noise. 
\begin{theorem} \label{t:regularisation}
Suppose that $\phi:\bbR_+ \mapsto \bbR_+$ is a continuous function with $\liminf_{x \rar \infty}\phi(x) >0$ such that $\sup_{x\geq 0} e^{-tx}\phi(x) <\infty$. Then, there exists a unique solution $f\in  L^2(m)$ to the following for any $g \in L^2(m)$ and $t>0$:
\be \label{e:reggen}
(1-\gamma)P_t f +\gamma \phi(-A)f=g, \qquad \gamma \in (0,1).
\ee
Moreover,  the solution is given by 
\be \label{e:regsol}
f=\int_0^{\infty}\frac{1}{\gamma \phi(\lambda)+(1-\gamma)e^{-\lambda t}}dE_{\lambda}g,
\ee
and has the property that 
\be \label{e:minimizer}
(1-\gamma)f= \argmin_{h\in L^2(m)}\|P_t h -g\|^2 +\frac{\gamma}{1-\gamma}(P_t \phi(-A)h,h).
\ee
\end{theorem}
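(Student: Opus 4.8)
The plan is to rely entirely on the spectral calculus developed above, since every operator in play is a continuous function of $-A$ applied through the spectral family $(E_\lambda)$. The governing function is
\be
\psi(\lambda):=\gamma\phi(\lambda)+(1-\gamma)e^{-\lambda t}.
\ee
First I would verify that $\psi$ is bounded away from $0$ on $[0,\infty)$. The hypothesis $\liminf_{x\rar\infty}\phi(x)>0$ handles large $\lambda$: for $\lambda$ beyond some threshold $\phi(\lambda)$ is bounded below by a positive constant, so $\gamma\phi(\lambda)\geq c\gamma>0$. On the complementary compact interval $[0,M]$, continuity of $\phi$ together with $\phi\geq 0$ gives $\gamma\phi(\lambda)\geq 0$ while $(1-\gamma)e^{-\lambda t}\geq (1-\gamma)e^{-Mt}>0$; hence $\inf_\lambda\psi(\lambda)=:\delta>0$. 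Consequently $1/\psi$ is a bounded continuous function, and by the spectral calculus recalled in Section \ref{s:ip} the operator $\int_0^\infty \frac{1}{\psi(\lambda)}dE_\lambda$ is a bounded self-adjoint operator with all of $L^2(m)$ as its domain. Denote it $R_\gamma$.

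Next I would check that $R_\gamma$ inverts the operator on the left-hand side of (\ref{e:reggen}). The condition $\sup_{x\geq 0}e^{-tx}\phi(x)<\infty$ guarantees, via the bound on $\lambda\mapsto\phi(\lambda)^2 e^{-2\lambda t}$, that $\phi(-A)$ maps the range of $R_\gamma$ back into $L^2(m)$, so the composition $\big((1-\gamma)P_t+\gamma\phi(-A)\big)R_\gamma$ is well defined; its spectral symbol is $\psi(\lambda)\cdot\frac{1}{\psi(\lambda)}\equiv 1$, so it equals the identity, and likewise in the other order. This shows $f:=R_\gamma g$ given by (\ref{e:regsol}) solves (\ref{e:reggen}). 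Uniqueness follows exactly as in the well-posedness argument preceding Theorem \ref{t:invchar}: if $f_1,f_2$ both solve (\ref{e:reggen}) then $\big((1-\gamma)P_t+\gamma\phi(-A)\big)(f_1-f_2)=0$, whence $\int_0^\infty \psi(\lambda)^2\,d(E_\lambda(f_1-f_2),f_1-f_2)=0$; since $\psi\geq\delta>0$ this forces $d(E_\lambda(f_1-f_2),f_1-f_2)\equiv 0$ and therefore $f_1=f_2$ $m$-a.e.

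For the variational characterisation (\ref{e:minimizer}), I would expand the objective functional
\be
F(h):=\|P_t h-g\|^2+\frac{\gamma}{1-\gamma}(P_t\phi(-A)h,h),
\ee
which is a strictly convex quadratic in $h$ (the quadratic form is self-adjoint and non-negative because $P_t$ and $\phi(-A)$ commute and are non-negative, so their product is non-negative on the common spectral representation). Setting the Fréchet derivative to zero gives the normal equation
\be
P_t^2 h-P_t g+\frac{\gamma}{1-\gamma}P_t\phi(-A)h=0,
\ee
and applying $P_t^{-1}$ formally — or, rigorously, reading off the spectral symbols — reduces this to $\big(e^{-\lambda t}+\frac{\gamma}{1-\gamma}\phi(\lambda)\big)h=g$ at the spectral level, i.e. $\frac{1}{1-\gamma}\psi(\lambda)\hat h=\hat g$. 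Thus the minimiser $h$ satisfies $(1-\gamma)h=R_\gamma g=f$, which is precisely (\ref{e:minimizer}).

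The main obstacle I anticipate is not any single hard estimate but ensuring that the two growth hypotheses on $\phi$ are used in the right places and that all formal spectral manipulations are licensed. Specifically, $\liminf_{x\rar\infty}\phi(x)>0$ is exactly what keeps $1/\psi$ bounded (hence $R_\gamma$ bounded and defined on all of $L^2(m)$), while $\sup_{x\geq 0}e^{-tx}\phi(x)<\infty$ is what ensures $\phi(-A)f\in L^2(m)$ for $f=R_\gamma g$, so that the left-hand side of (\ref{e:reggen}) genuinely lands in $L^2(m)$ and the normal equation in the variational step can legitimately be multiplied through by $P_t$. Keeping careful track of domains when writing the normal equation — in particular justifying the differentiation of $F$ and the passage to the spectral symbol — is the step most in need of care; everything else is a direct transcription through the spectral family.
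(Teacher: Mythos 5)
Your proposal reaches the same conclusions as the paper, but by a genuinely different and more direct route for the variational claim (\ref{e:minimizer}). For existence, uniqueness and the formula (\ref{e:regsol}) you and the paper do the same thing: observe that $\psi(\lambda):=\gamma\phi(\lambda)+(1-\gamma)e^{-\lambda t}$ is bounded away from zero, so that $1/\psi$ defines a bounded self-adjoint operator $R_\gamma$ inverting $(1-\gamma)P_t+\gamma\phi(-A)$ (you make the uniqueness argument explicit, which the paper leaves tacit). The divergence is in (\ref{e:minimizer}): the paper proves it first for finite-rank $A$ by coordinatewise minimisation in an eigenbasis, then extends to bounded $A$ and finally to general $A$ through two successive strong-operator-convergence limits, using $\sup_{x\geq 0} e^{-tx}\phi(x)<\infty$ to obtain uniform boundedness of the approximating operators $P^n_t\phi(-A^n)$. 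You instead note that the objective is a convex quadratic whose quadratic form $B:=P_t^2+\frac{\gamma}{1-\gamma}P_t\phi(-A)$ is bounded, self-adjoint, with strictly positive spectral symbol $e^{-\lambda t}\psi(\lambda)/(1-\gamma)$, solve the normal equation $Bh=P_tg$ by spectral calculus, and conclude. This bypasses the approximation scheme entirely. To make it airtight you should add one sentence: since $B$ is \emph{not} coercive (its symbol tends to $0$ as $\lambda\rar\infty$), existence of a minimiser cannot come from coercivity; it comes from convexity itself, e.g.\ by completing the square, $F(h)=F(h^*)+(B(h-h^*),h-h^*)\geq F(h^*)$ for the critical point $h^*$, with equality only at $h=h^*$ because $B$ has trivial kernel.

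Two slips remain, both fixable in a line. First, your justification that $\phi(-A)R_\gamma g\in L^2(m)$ ``via the bound on $\phi(\lambda)^2e^{-2\lambda t}$'' does not work as stated: bounding $\psi$ below by $(1-\gamma)e^{-\lambda t}$ or by $\delta$ leaves you with quantities of the type $\phi(\lambda)e^{\lambda t}$, which the hypotheses do not control. The correct and simpler observation is $\psi\geq\gamma\phi$, hence $\phi/\psi\leq 1/\gamma$, which uses the second hypothesis not at all; where $\sup_{x\geq 0}e^{-tx}\phi(x)<\infty$ is genuinely needed --- in your argument as in the paper's --- is to make $P_t\phi(-A)$ a bounded operator, so that the functional in (\ref{e:minimizer}) is finite on all of $L^2(m)$ and your normal equation involves only bounded operators. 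Second, your final line inverts a factor: from $\frac{1}{1-\gamma}\psi(\lambda)\,dE_{\lambda}h=dE_{\lambda}g$ one gets $h=(1-\gamma)R_\gamma g=(1-\gamma)f$, i.e.\ the minimiser \emph{equals} $(1-\gamma)f$, which is what (\ref{e:minimizer}) asserts; as written, ``$(1-\gamma)h=R_\gamma g=f$'' would say the minimiser is $f/(1-\gamma)$. The computation immediately preceding that line is correct, so this is only a misprint (the paper's own Case 1 ends with the identical one), but it should be corrected.
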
 
\begin{proof}
Observe that $f$ given by (\ref{e:regsol}) is well-defined and belongs to $L^2(m)$ since $g\in L^2(m)$ and $\liminf_{x \rar \infty}\phi(x) >0$. Moreover, it belongs to the domain of $\phi(-A)$. The fact that  $f$ is the solution of (\ref{e:reggen}) is easy. Indeed, using the spectral representation 
\[
(1-\gamma)P_t f +\gamma \phi(-A)f=\int_0^{\infty}\frac{e^{-\lambda t}(1-\gamma)+ \gamma\phi(\lambda)}{\gamma \phi(\lambda)+(1-\gamma)e^{-\lambda t}}dE_{\lambda}g=g.
\]
Thus, it remains to show (\ref{e:minimizer}). 

\noindent {\em Case 1:} First suppose that $A$ is a finite rank operator, which is equivalent to saying that $P_t$ is a finite rank operator. Assume that the dimension of the range of $A$ is $n\in \mathbb{N}$. This implies  the existence of an orthonormal family $(\nu_k)_{k=1}^n \subset L^2(m)$ and real numbers $(\lambda_k)_{k=1}^n$ such that for any $h \in L^2(m)$
\[
-Ah =\sum_{k=1}^n \lambda_k (\nu_k,h)\nu_k.
\]
The corresponding spectral family is defined via
\bean
E_{\lambda}h&=&\sum_{\lambda_k\leq \lambda}(\nu_k,h)\nu_k,\; \lambda>0;\\
h_0:=E_{0}h&=&h-\sum_{k=1}^n (\nu_k,h)\nu_k.
\eean
Consequently,
\bean
&&\|P_t h -g\|^2 +\frac{\gamma}{1-\gamma}(P_t \phi(-A)h,h)\\
&&=\sum_{k=1}^n \left(e^{-2 \lambda_k t} (h, \nu_k)^2 -2 e^{-\lambda_k t} (h,e_k)(g,\nu_k) + (g,\nu_k)^2 +\frac{\gamma}{1-\gamma}e^{-\lambda_k t}\phi(\lambda_k)(h,\nu_k)^2\right)\\
&&+\int_{\bfE}\left((h_0-g_0)^2+\frac{\gamma}{1-\gamma}\phi(0)h_0^2\right)dm.
\eean
Minimising the quadratic in every summand we deduce that the minimiser, $\hat{h}$, satisfies
\[
(\hat{h},e_k)=\frac{e^{-\lambda_k t} (g,e_k)}{e^{-2\lambda_k t} +\frac{\gamma}{1-\gamma}e^{-\lambda_k t} \phi(\lambda_k)}=\frac{(g,e_k)}{e^{-\lambda_k t} +\frac{\gamma}{1-\gamma} \phi(\lambda_k)}
\]
Moreover, minimising the integrand that is quadratic in $h_0$ we see for $m$-a.e. $y\in \bfE$
\[
\hat{h}_0(y)=\frac{g_0(y)}{1+\frac{\gamma}{1-\gamma} \phi(0)}.
\]
Thus,
\[
(1-\gamma)\hat{h}=\int_0^{\infty}\frac{1}{\gamma \phi(\lambda)+(1-\gamma)e^{-\lambda t}}dE_{\lambda}g,
\]
which establishes (\ref{e:minimizer}). 

Note that (\ref{e:regsol}) can be written alternatively that
\[
f= F(-A),
\]
where $F:\bbR_+ \mapsto \bbR_+$ is the following bounded continuous function:
\[
F(x)=\frac{1}{\gamma\phi(x)+(1-\gamma)e^{-xt}}.
\]
Thus, we have shown
\be \label{e:altrep}
(1-\gamma)F(-A)g=\argmin_{h\in L^2(m)}\|P_t h -g\|^2 +\frac{\gamma}{1-\gamma}(P_t \phi(-A)h,h)
\ee
when $A$ is a finite rank operator.
This alternative representation is going to be useful in extending the validity of (\ref{e:minimizer}) for a general $A$. 

\noindent {\em Case 2:} Suppose that $A$ is a bounded operator. Define a sequence of finite rank self-adjoint nonpositive operators, $(A_n)$, via 
\[
A^n e_k =A e_k, \, k\leq n, \quad A^n e_k =0, \, k\geq n+1,
\]
where $(e_k)_{k=1}^{\infty}$ is an orthonormal basis for the Hilbert space $L^2(m)$.  Then, $A^n\rar A$ strongly, i.e. for any $h\in L^2(m)$, $A^nh \rar A^h$ in $L^2(m)$. Denote the associated semigroups by $(P^n_t)$ and define $S^n:L^2(m) \mapsto \bbR_+$ and $S:L^2(m) \mapsto \bbR_+$ as follows:
\bean
S_n(h)&=&\|P^n_t h -g\|^2 +\frac{\gamma}{1-\gamma}(P^n_t \phi(-A^n)h,h)\\
S(h)&=&\|P_t h -g\|^2 +\frac{\gamma}{1-\gamma}(P_t \phi(-A)h,h).
\eean
Since $P^n_t=\exp(tA^n), P_t=\exp(tA)$, $\phi$ is continuous, and $\sup\|A_n\|<\infty$, it follows that $P^n_t$ and $P^n_t \phi(-A^n)$ converge strongly to $P_t$ and $P_t \phi(-A)$, respectively (see Corollary 2 on p.2220 of \cite{DS3}). Therefore, we have
\be \label{e:lboundmin}
S(h)=\lim_{n \rar \infty} S_n(h) \geq \lim_{n \rar \infty} S_n((1-\gamma)F(-A^n)g),
\ee
where the inequality is due to (\ref{e:altrep}) since $A^n$ is a finite rank operator. 

Let $h_n:=F(-A^n)g$ and observe that $h_n \rar F(-A)g$ in $L^2(m)$ since $F$ is continuous. Thus,  $(P^n_t \phi(-A^n)h_n,F(-A)g) \rar (P_t \phi(-A)h_n,F(-A)g)$ as $n \rar \infty$ as $(P_t^n)$ and $(A^n)$ are strongly convergent to $P_t$ and $A$. On the other hand,
\[
(P^n_t \phi(-A^n)h_n,h_n -F(-A)g)\leq K \|h_n-F(-A)g\|^2
\]
for some $K<\infty$ $P^n_t \phi(-A^n)$ are uniformly bounded operators due to the assumption that $e^{-tx}\phi(x)$ is bounded.  Since $\|h_n-F(-A)g\|^2 \rar 0$ as $n \rar \infty$,  we may now conclude that $(P^n_t \phi(-A^n)h_n,h_n) \rar (P_t \phi(-A)h_n,F(-A)g)$ as $n \rar \infty$. By similar arguments we can also show that $\|P^n_th_n-g\|\rar \|P_t F(-A)g-g\|$. Therefore, $S_n((1-\gamma)F(-A^n)g) \rar S((1-\gamma)F(-A)g$, which in turn implies
\[
S(h)\geq S((1-\gamma)F(-A)g), \quad \forall h \in L^2(m)
\]
in view of (\ref{e:lboundmin}). Since $F(-A)g \in L^2(m)$, we have
\[
(1-\gamma)F(-A)g=\argmin_{h\in L^2(m)}\|P_t h -g\|^2 +\frac{\gamma}{1-\gamma}(P_t \phi(-A)h,h)
\]
\noindent {\em Case 3:} Given a general $A$ define
\[
A^n:=\int_0^n \lambda dE_{\lambda}, \qquad n \geq 1.
\]
Note that each $A^n$ is a bounded operator. Thus, (\ref{e:minimizer}) holds when $A$ is replaced by $A^n$ and $P_t$ by $P^n_t$, where $(P^n_t)$ is the associated semigroup generated by $A^n$.   Moreover, $(P^n_t \phi(-A^n))$ is a sequence of uniformly bounded operators converging strongly to $(P_t \phi(-A)$ as $n \rar \infty$. Thus, imitating the proof of the previous case we can similarly establish that (\ref{e:minimizer}) is satisfied.
\end{proof}
\begin{remark} The assumption that $\liminf_{x \rar \infty}\phi(x) >0$ cannot be dispensed easily if (\ref{e:reggen}) is to have a solution for any given $g \in L^2(m)$. To wit take $\phi(x)=e^{-tx}$. Then  (\ref{e:reggen}) becomes $P_tf=g$, which does not have a solution in general. 
\end{remark}
Observe that $\phi(x)=e^{tx}$ satisfies the conditions in Theorem \ref{t:regularisation} and $P_t \phi(-A)$ is the identity operator. This observation leads to the following corollary. 
\begin{corollary} \label{c:regularisation}
There exists a unique solution $f\in  L^2(m)$ to the following for any $g \in L^2(m)$ and $t>0$:
\be \label{e:reggenc}
(1-\gamma)P_t f +\gamma P_t^{-1}f=g, \qquad \gamma \in (0,1).
\ee
Moreover,  the solution is given by 
\be \label{e:regsolc}
f=\int_0^{\infty}\frac{1}{\gamma e^{t\lambda}+(1-\gamma)e^{-\lambda t}}dE_{\lambda}g,
\ee
belongs to $\cD(P_t^{-1})$, and has the property that 
\be \label{e:minimizerc}
(1-\gamma)f= \argmin_{h\in L^2(m)}\|P_t h -g\|^2 +\frac{\gamma}{1-\gamma}(h,h).
\ee
\end{corollary}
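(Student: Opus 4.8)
The plan is to obtain this corollary as a direct specialisation of Theorem \ref{t:regularisation} to the choice $\phi(x) = e^{tx}$, using Theorem \ref{t:invchar} to identify the resulting operator $\phi(-A)$. First I would check that $\phi(x) = e^{tx}$ satisfies the three hypotheses of Theorem \ref{t:regularisation}: it is continuous and $\bbR_+$-valued, $\liminf_{x \rar \infty} e^{tx} = +\infty > 0$, and $\sup_{x \geq 0} e^{-tx}\phi(x) = \sup_{x \geq 0} e^{-tx}e^{tx} = 1 < \infty$, so all three conditions hold and Theorem \ref{t:regularisation} applies verbatim.

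The crux is the identification $\phi(-A) = P_t^{-1}$. By the spectral calculus recalled in Section \ref{s:ip}, $\phi(-A) = \int_0^{\infty} e^{t\lambda}\, dE_{\lambda}$ with domain $\{f : \int_0^{\infty} e^{2t\lambda}\, d(E_{\lambda}f,f) < \infty\}$, and Theorem \ref{t:invchar} says precisely that this operator, with this domain, is $P_t^{-1}$. Hence $P_t\,\phi(-A) = \int_0^{\infty} e^{-\lambda t}e^{t\lambda}\, dE_{\lambda}$ collapses to the identity operator. Substituting $\phi(\lambda) = e^{t\lambda}$ into (\ref{e:reggen}), (\ref{e:regsol}), and (\ref{e:minimizer}) then delivers (\ref{e:reggenc}), (\ref{e:regsolc}), and (\ref{e:minimizerc}) respectively, the last because the penalty term $(P_t\phi(-A)h,h)$ becomes $(h,h)$.

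It remains only to confirm that the solution $f$ lies in $\sD(P_t^{-1})$. This is already contained in Theorem \ref{t:regularisation}, whose solution belongs to $\sD(\phi(-A)) = \sD(P_t^{-1})$; alternatively I would verify it directly from Theorem \ref{t:invchar}, using $d(E_{\lambda}f,f) = (\gamma e^{t\lambda} + (1-\gamma)e^{-\lambda t})^{-2}\, d(E_{\lambda}g,g)$ together with the elementary bound $e^{2\lambda t}/(\gamma e^{t\lambda} + (1-\gamma)e^{-\lambda t})^2 \leq \gamma^{-2}$, which yields $\int_0^{\infty} e^{2\lambda t}\, d(E_{\lambda}f,f) \leq \gamma^{-2}\|g\|^2 < \infty$. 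Because the corollary is in essence a substitution into an already-proved theorem, I anticipate no genuine obstacle; the only point demanding care is the bookkeeping for the unbounded operator $P_t^{-1} = \phi(-A)$ and its domain, which Theorem \ref{t:invchar} resolves cleanly.
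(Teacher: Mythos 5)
Your proposal is correct and follows essentially the same route as the paper, which derives the corollary by observing that $\phi(x)=e^{tx}$ satisfies the hypotheses of Theorem \ref{t:regularisation} and that $P_t\phi(-A)$ is then the identity operator, with Theorem \ref{t:invchar} providing the identification $\phi(-A)=P_t^{-1}$. Your explicit check that $f\in\sD(P_t^{-1})$ via the bound $e^{2\lambda t}/\bigl(\gamma e^{t\lambda}+(1-\gamma)e^{-\lambda t}\bigr)^2\leq \gamma^{-2}$ simply makes precise a point the paper leaves implicit.
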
 
 If $g\in \cD(P_t^{-1})$, one should expect that the solutions of (\ref{e:reggen}) converge to $P_t^{-1}g$ as $\gamma \rar 0$. This is indeed the case as the following proposition shows. 
\begin{proposition} Let $\phi$ be as in Theorem \ref{t:regularisation} and for each $\gamma \in (0,1)$ denote by $f_{\gamma}$ the solution of (\ref{e:reggen}). Assume further that $g \in \cD(P_t^{-1})$. Then
\[
\lim_{\gamma \rar 0} \|f_{\gamma}-P_t^{-1}g\|=0.
\]
\end{proposition}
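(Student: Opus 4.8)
The plan is to use the spectral calculus to turn the operator statement into a scalar dominated-convergence argument against the finite measure $d(E_\lambda g,g)$ on $[0,\infty)$. First I would record the two explicit spectral representations already at our disposal. By (\ref{e:regsol}) the regularised solution is
\[
f_\gamma=\int_0^\infty \frac{1}{\gamma\phi(\lambda)+(1-\gamma)e^{-\lambda t}}\,dE_\lambda g,
\]
while Theorem \ref{t:invchar} gives $P_t^{-1}g=\int_0^\infty e^{\lambda t}\,dE_\lambda g$ together with the crucial integrability $\int_0^\infty e^{2\lambda t}\,d(E_\lambda g,g)<\infty$, which is precisely the hypothesis $g\in \cD(P_t^{-1})$. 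Since the two operators are diagonalised by the same spectral family, I would write the error as a single spectral integral,
\[
\|f_\gamma-P_t^{-1}g\|^2=\int_0^\infty \Big(\frac{1}{\gamma\phi(\lambda)+(1-\gamma)e^{-\lambda t}}-e^{\lambda t}\Big)^2\,d(E_\lambda g,g),
\]
so that the whole problem reduces to showing this scalar integral tends to $0$ as $\gamma\downarrow 0$.

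Next I would verify the hypotheses of the dominated convergence theorem for the integrand
\[
r_\gamma(\lambda):=\Big(\frac{1}{\gamma\phi(\lambda)+(1-\gamma)e^{-\lambda t}}-e^{\lambda t}\Big)^2 .
\]
Pointwise convergence is immediate: for each fixed $\lambda$ the continuity of $\phi$ gives $\gamma\phi(\lambda)+(1-\gamma)e^{-\lambda t}\to e^{-\lambda t}$, hence $r_\gamma(\lambda)\to 0$. For the domination the key observation is that $\phi\ge 0$, so discarding the non-negative term $\gamma\phi(\lambda)$ only enlarges the fraction:
\[
0\le \frac{1}{\gamma\phi(\lambda)+(1-\gamma)e^{-\lambda t}}\le \frac{1}{(1-\gamma)e^{-\lambda t}}=\frac{e^{\lambda t}}{1-\gamma}.
\]
Consequently, for all $\gamma\le \tfrac12$ the triangle inequality yields $r_\gamma(\lambda)\le \big(\tfrac{e^{\lambda t}}{1-\gamma}+e^{\lambda t}\big)^2\le 9\,e^{2\lambda t}$, a bound independent of both $\gamma$ and $\phi$. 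Since $g\in \cD(P_t^{-1})$ guarantees $\int_0^\infty e^{2\lambda t}\,d(E_\lambda g,g)<\infty$, the majorant $9e^{2\lambda t}$ is integrable against $d(E_\lambda g,g)$, and dominated convergence gives $\|f_\gamma-P_t^{-1}g\|^2\to 0$, which is the claim.

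The only genuinely delicate point is the construction of the $\gamma$-uniform dominating function, and this is exactly where the standing assumptions do their work: the non-negativity of $\phi$ lets me drop the $\gamma\phi$ term to obtain a bound free of $\phi$ (which may well be unbounded), while $g\in \cD(P_t^{-1})$ supplies the integrability of that bound via Theorem \ref{t:invchar}. I would stress that beyond what Theorem \ref{t:regularisation} already invoked to place $f_\gamma$ in $L^2(m)$, neither the growth condition $\sup_{x\ge 0}e^{-tx}\phi(x)<\infty$ nor $\liminf_{x\to\infty}\phi(x)>0$ is needed again here; once $f_\gamma$ and $P_t^{-1}g$ are both known to lie in $L^2(m)$, the convergence is purely a dominated-convergence phenomenon on the spectral side.
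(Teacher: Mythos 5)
Your proof is correct and follows essentially the same route as the paper: both express $\|f_\gamma - P_t^{-1}g\|^2$ as the spectral integral $\int_0^\infty\bigl(e^{\lambda t}-\frac{1}{\gamma\phi(\lambda)+(1-\gamma)e^{-\lambda t}}\bigr)^2 d(E_\lambda g,g)$, dominate the integrand by a constant multiple of $e^{2\lambda t}$ after discarding the nonnegative term $\gamma\phi(\lambda)$, and invoke dominated convergence using the integrability supplied by $g\in\cD(P_t^{-1})$. If anything, your restriction to $\gamma\le\tfrac12$ to get the $\gamma$-uniform majorant $9e^{2\lambda t}$ is slightly more careful than the paper's bound $\bigl(\tfrac{2-\gamma}{1-\gamma}\bigr)^2e^{2\lambda t}$, which still depends on $\gamma$.
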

\begin{proof}
The hypothesis that $g \in P_t^{-1}$ implies
\[
\int_0^{\infty} e^{2 \lambda t}d(E_{\lambda}g,g)<\infty.
\]
On the other hand,
\[
\left(e^{\lambda t} -\frac{1}{\gamma \phi(\lambda)+(1-\gamma)e^{-\lambda t}}\right)^2 \leq \left(\frac{2-\gamma}{1-\gamma}\right)^2 e^{2 \lambda t}.
\]
Thus, in view of the Dominated Convergence Theorem, we have
\[
\lim_{\gamma\rar 0}\int_0^{\infty}\left(e^{\lambda t} -\frac{1}{\gamma \phi(\lambda)+(1-\gamma)e^{-\lambda t}}\right)^2d(E_{\lambda}g,g)=0,
\]
which yields the claim.
\end{proof}
Although looking abstract, Theorem \ref{t:regularisation} furnishes us with a plethora of concrete examples for regularising the inverse problem (\ref{e:IP}). To see this in a specific example suppose that the transition function, $(P_t)$, possesses a density with respect to $m$. Let us denote this transition density with $p(t,\cdot,\cdot)$ and introduce a new operator, $B$, on $L^2(m)$ via
\be \label{e:mixgen}
Bf(x):= \int_{\bfE} f(y)p(T^*,x,y)m(dy)-f(x)=P_{T^*}f(x) -f(x),
\ee
where $T^*>0$ is fixed. Due to the symmetry of $P_{T^*}$, $B$ is a also bounded symmetric operator on $L^2(m)$. Moreover, it corresponds to the generator of a Markov jump process that remains constant between the jumps of a Poisson process with unit parameter and moves between the points of $\bfE$ according to the transition function $P_{T^*}$ (see Example \ref{ex:jump}). Thus, by enlarging the probability space if necessary, we can assume the existence of a Markov jump process, $J$,  with generator $B$ and independent from $X$. The semigroup, $(\tilde{P}_t)$, associated with $J$  is easily seen to satisfy $\tilde{P}_t=e^{tB}=\phi(-A)$, where
$\phi(x)=\exp(t(e^{-tx}-1))$. Clearly, $\phi$ satisfies the conditions of Theorem \ref{t:regularisation}. Thus, $L^2(m)=\cD(Q_t^{-1})$, where 
\[
Q_t= (1-\gamma)P_t + \gamma \tilde{P}_t.
\]
Note that  $(Q_t)$ is the semigroup of the Markov process, $Y$, where
\[
Y= \xi X+ (1-\xi)J,
\]
and $\xi$ is a Bernoulli random variable independent of $X$ and $J$ with $Prob(\xi=1)=1-\gamma$. Therefore, mixing the original Markov process with a pure jump process we observe that the inverse problem admits a solution. This construction readily extends to the following result. 
\begin{corollary} \label{c:mixing} Suppose that $K$ is a bounded positive operator such that $K=\psi(-A)$ for some bounded continuous function $\psi:\bbR_+\mapsto [0,1]$. In an enlargement of the probability space there exists a Markov process $Y$ such that
\[
Y= \xi X+ (1-\xi)J,
\]
where $\xi$ is a non-degenerate Bernoulli random variable, $J$ is a jump Markov process with generator
\[
Bf=Kf-f, \qquad f \in L^2(m),
\]
and $\xi, J$ and $X$ are mutually independent.
Moreover, $\cD(Q_t^{-1})=L^2(m)$, where $(Q_t)$ is the semigroup associated to $Y$.
\end{corollary}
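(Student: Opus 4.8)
The plan is to obtain the corollary as a direct specialisation of Theorem \ref{t:regularisation}, following the worked example that precedes the statement but now for a general symbol $\psi$. The argument splits into a probabilistic construction of $Y$ and an analytic verification that the time-$t$ transition operator of $Y$ is invertible on all of $L^2(m)$.

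For the construction, I would first note that $B:=K-I$ is bounded and symmetric because $K=\psi(-A)$ with $\psi$ bounded; since $\psi\leq 1$ forces $\|K\|\leq 1$, and assuming $K$ is positivity-preserving, $K$ is a sub-Markovian transition operator of the type appearing in Example \ref{ex:jump}. The standard existence theory for Markov jump processes with a bounded generator (Section 4.2 of \cite{EK}) then produces a process $J$ with generator $B$ and semigroup $\tilde{P}_t=e^{tB}$. Passing to an enlargement of the probability space, I take a non-degenerate Bernoulli variable $\xi$ with $\bbP(\xi=1)=1-\gamma$, $\gamma\in(0,1)$, together with $X$ and $J$ mutually independent, and set $Y=\xi X+(1-\xi)J$; conditioning on $\xi$ shows that the time-$t$ transition operator of $Y$ is $Q_t=(1-\gamma)P_t+\gamma\tilde{P}_t$.

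The analytic heart is to recognise $\tilde{P}_t$ as an admissible perturbation. Using the spectral calculus and the boundedness of $B$,
\[
\tilde{P}_t=e^{tB}=\int_0^{\infty}e^{t(\psi(\lambda)-1)}\,dE_{\lambda}=\phi(-A),\qquad \phi(\lambda):=e^{t(\psi(\lambda)-1)}.
\]
Since $\psi$ is continuous with range in $[0,1]$, we have $\phi(\lambda)\in[e^{-t},1]$, so $\phi$ is continuous, $\liminf_{\lambda\rar\infty}\phi(\lambda)\geq e^{-t}>0$, and $\sup_{\lambda\geq 0}e^{-\lambda t}\phi(\lambda)\leq 1<\infty$. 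Hence $\phi$ satisfies all the hypotheses of Theorem \ref{t:regularisation} for the fixed $t$, and that theorem produces a unique $f\in L^2(m)$ solving $(1-\gamma)P_tf+\gamma\phi(-A)f=Q_tf=g$ for every $g\in L^2(m)$, with $f=\int_0^{\infty}[\gamma\phi(\lambda)+(1-\gamma)e^{-\lambda t}]^{-1}dE_{\lambda}g$. Thus $Q_t$ maps $L^2(m)$ bijectively onto itself and $\cD(Q_t^{-1})=L^2(m)$. Alternatively, I could invoke the spectral characterisation of Theorem \ref{t:invchar} directly: the symbol of $Q_t$ is bounded below by $\gamma e^{-t}>0$, so $[\gamma\phi(\lambda)+(1-\gamma)e^{-\lambda t}]^{-2}\leq\gamma^{-2}e^{2t}$ and $\int_0^{\infty}[\,\cdot\,]^{-2}d(E_{\lambda}g,g)\leq\gamma^{-2}e^{2t}\|g\|^2<\infty$ for all $g$.

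The inversion itself is therefore forced by the uniform lower bound $\gamma\phi(\lambda)\geq\gamma e^{-t}$ and is not the difficulty. The part demanding real care is the probabilistic claim: one must check that the abstract assumption that $K$ is bounded and positive truly yields a sub-Markovian kernel, so that $B=K-I$ is a bona fide jump generator and $J$ exists, and one must be precise about the sense in which $Y$ carries the semigroup $(Q_t)$. The coin-flip mixture unambiguously pins down the one-dimensional time-$t$ law through $Q_t$, which is exactly what the inverse problem uses; making the full Markov and semigroup structure of $Y$ rigorous is where I would concentrate the write-up, rather than on the elementary spectral estimate.
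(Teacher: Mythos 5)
Your proposal is correct and follows essentially the same route as the paper: the paper's proof of Corollary \ref{c:mixing} is exactly the discussion preceding its statement (carried out there for the special case $K=P_{T^*}$ and declared to ``readily extend''), namely constructing $J$ from the bounded generator $B=K-I$ as in Example \ref{ex:jump}, identifying $\tilde{P}_t=e^{tB}=\phi(-A)$ with $\phi(\lambda)=e^{t(\psi(\lambda)-1)}\in[e^{-t},1]$, and applying Theorem \ref{t:regularisation} to conclude that $Q_t=(1-\gamma)P_t+\gamma\tilde{P}_t$ is invertible on all of $L^2(m)$. Your closing caveats are also well placed: the paper itself does not verify that the abstract hypothesis on $K$ yields a genuine sub-Markovian kernel, nor that the mixture $Y$ is Markov with semigroup $(Q_t)$ (indeed $(Q_t)$ fails Chapman--Kolmogorov in general), so flagging these as the points needing care, while noting that only the single operator $Q_t$ matters for the inverse problem, is if anything more scrupulous than the original text.
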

In view of the relationship between the inverse problem and the backward PDEs the above corollary leads to the following in view of Corollary \ref{c:invpde}.
\begin{corollary} \label{c:mixingPDE} Suppose that $K$ is a bounded positive operator such that $K=\psi(-A)$ for some bounded continuous function $\psi:\bbR_+\mapsto [0,1]$. Then, for any $g \in L^2(m)$, there exists a unique solution on  
\[
u_t+ \xi Au+ (1-\xi)(Ku-u)=0, \qquad u(0,\cdot)=g.
\]
\end{corollary}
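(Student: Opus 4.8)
The plan is to read the displayed equation as the backward Cauchy problem attached to the mixed process $Y$ of Corollary \ref{c:mixing}, and then to invoke the equivalence of Corollary \ref{c:invpde} for $Y$ in place of $X$. Writing $\gamma:=\bbP(\xi=0)\in(0,1)$, so that $\bfE\xi=1-\gamma$, the generator of $Y$ is the $m$-symmetric operator
\[
\cA:=(1-\gamma)A+\gamma B=(1-\gamma)A+\gamma(K-I),\qquad \sD(\cA)=\sD(A),
\]
the equality of domains being due to the boundedness of $B=K-I$. With this notation the equation in the statement is exactly $u_t+\cA u=0$, $u(0,\cdot)=g$, since
\[
\cA u=(1-\gamma)Au+\gamma(Ku-u)=\bfE\!\left[\xi Au+(1-\xi)(Ku-u)\right].
\]

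First I would record that $(Q_t)$, the semigroup associated to $Y$ in Corollary \ref{c:mixing}, is again a strongly continuous, $m$-symmetric semigroup on $L^2(m)$ with generator $\cA$: symmetry and strong continuity are inherited from those of $(P_t)$ and of $(\tilde P_t):=(e^{tB})$, the latter being uniformly continuous because $B$ is bounded and symmetric. The purpose of this step is that the proof of Corollary \ref{c:invpde} uses only the spectral representation of a non-negative symmetric generator; nothing there is special to $X$. Hence the one-to-one correspondence of Corollary \ref{c:invpde} holds verbatim with $(P_t,A,P_T^{-1})$ replaced by $(Q_t,\cA,Q_T^{-1})$: for a given $g\in L^2(m)$ the problem $u_t+\cA u=0$, $u(0,\cdot)=g$ possesses a (necessarily unique) $L^2$-solution if and only if $g\in\sD(Q_T^{-1})$, in which case $u(t,\cdot)=Q_{T-t}Q_T^{-1}g$ and $u(T,\cdot)=Q_T^{-1}g$.

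It then remains only to feed in the invertibility supplied by Corollary \ref{c:mixing}, namely $\sD(Q_T^{-1})=L^2(m)$. Since every $g\in L^2(m)$ lies in $\sD(Q_T^{-1})$, the transferred version of Corollary \ref{c:invpde} delivers existence and uniqueness of the $L^2$-solution for each such $g$, which is precisely the assertion. The step I expect to require the most care is the first one: confirming that the transition family of $Y$ genuinely fits the $m$-symmetric, strongly continuous semigroup framework of Section \ref{s:ip}, with $\cA$ as its self-adjoint generator, so that the spectral argument underlying Corollary \ref{c:invpde} can be quoted without modification; once that is secured the corollary is immediate. (As elsewhere in the paper, one should note that only existence and uniqueness in $L^2(m)$ are claimed, not positivity of the solution, cf.\ Remark \ref{r:positive}.)
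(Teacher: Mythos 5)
Your proposal is a faithful expansion of the paper's own one-line justification (the paper likewise just invokes Corollary \ref{c:mixing} together with Corollary \ref{c:invpde}), but the step you yourself single out as ``requiring the most care'' is precisely where the argument breaks, and it cannot be secured as stated. The family $Q_t=(1-\gamma)P_t+\gamma\tilde{P}_t$ is \emph{not} a semigroup: the cross terms do not cancel, since
\[
Q_tQ_s=(1-\gamma)^2P_{t+s}+\gamma(1-\gamma)\bigl(P_t\tilde{P}_s+\tilde{P}_tP_s\bigr)+\gamma^2\tilde{P}_{t+s}\neq Q_{t+s}
\]
in general (probabilistically, the past of $Y$ reveals whether $Y=X$ or $Y=J$, so the mixture is not Markov for its own filtration). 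Consequently the assertion that ``$(Q_t)$ is a strongly continuous $m$-symmetric semigroup with generator $\cA$'' is not available in the sense Corollary \ref{c:invpde} requires. The strongly continuous semigroup generated by $\cA=(1-\gamma)A+\gamma(K-I)$ is, by spectral calculus,
\[
R_t:=e^{t\cA}=\int_0^{\infty}e^{-t[(1-\gamma)\lambda+\gamma(1-\psi(\lambda))]}\,dE_{\lambda},
\qquad\text{whereas}\qquad
Q_t=\int_0^{\infty}\bigl[(1-\gamma)e^{-t\lambda}+\gamma e^{-t(1-\psi(\lambda))}\bigr]dE_{\lambda},
\]
and these are different operators. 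Since the proof of Corollary \ref{c:invpde} rests on the identity $P_t=\int_0^\infty e^{-\lambda t}dE_\lambda$ tying the semigroup to the exponential of its generator, it transfers verbatim to the pair $(R_t,\cA,R_T^{-1})$, not to $(Q_t,\cA,Q_T^{-1})$: solvability of $u_t+\cA u=0$, $u(0,\cdot)=g$ on $[0,T]$ is equivalent to $g\in\sD(R_T^{-1})$.

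This is not a cosmetic distinction, because the two domains genuinely differ. Running the spectral computation of Theorem \ref{t:invchar} for $R_T$ and using $0\leq 1-\psi(\lambda)\leq 1$, the exponent $2T[(1-\gamma)\lambda+\gamma(1-\psi(\lambda))]$ differs from $2(1-\gamma)T\lambda$ by a bounded amount, so $\sD(R_T^{-1})=\sD(P_{(1-\gamma)T}^{-1})$, which is a proper dense subspace of $L^2(m)$ whenever $A$ is unbounded (e.g.\ Brownian motion); by contrast $\sD(Q_T^{-1})=L^2(m)$. Hence Corollary \ref{c:mixing} does not deliver well-posedness of the displayed scalar equation: what inverting $Q_T$ actually solves is the coupled system $v_t+Av=0$, $w_t+(K-I)w=0$ with $(1-\gamma)v(0,\cdot)+\gamma w(0,\cdot)=g$ and $v(T,\cdot)=w(T,\cdot)$, not the single equation $u_t+\cA u=0$. (Reading $\xi$ in the statement as the Bernoulli random variable does not help either: on the event $\{\xi=1\}$ the equation degenerates to the original ill-posed problem $u_t+Au=0$, $u(0,\cdot)=g$.) In fairness, this gap is inherited from the paper itself, whose justification makes the identical silent identification of the mixture of semigroups with the semigroup of the mixed generator; but a proof of the corollary as displayed would have to confront this point, and under the literal reading the Cauchy problem remains ill-posed, being solvable exactly for $g\in\sD(P_{(1-\gamma)T}^{-1})$.
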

The above corollaries show that if we construct a new process by randomly mixing the original process with a suitably chosen independent  jump process, the inverse problem becomes well-posed when $P_t$ is replaced with the corresponding transition function of the new process. Note that $\gamma$ can be chosen arbitrarily close to 1, which in practice means that one would almost never see the jump process, $Y$.

 In particular when the generator is a differential operator Corolllary \ref{c:mixingPDE} shows the existence and uniqueness of a solution for the following partial integro-differential equation for any $\xi \in (0,1) $, $T^*>0$, and $g \in L^2(m)$, which can be viewed as the regularisation for the ill-posed PDE in (\ref{e:bpde}).
 \bea \label{e:r-pide}
 u_t(t,x) + \xi Au(t,x) +(1-\xi) \int_{\bfE} u(t,y) P(T^*,x,dy)-(1-\xi)u(t,x)&=&0;\\
 u(0,\cdot)&=&g. \nn
 \eea
\begin{example} Suppose that $X$ is a Brownian motion and $m$ is the Lebesgue measure on the real line. Then, the generator $B$ defined in (\ref{e:mixgen}) corresponds to a compound Poisson process with unit intensity, whose jumps are normally distributed with mean $0$ and variance $T^*$. In this case the process $Y$ of Corollary \ref{c:mixing} is a Brownian motion with probability $1-\gamma$ and a compound Poisson process with probability $\gamma$. 

Corollary \ref{c:mixingPDE}, on the other hand, gives us a regularisation of the ill-posed backward heat equation with an initial condition. The regularisation takes the form of a partial integro-differential equation as follows:
\bean
u_t +\frac{\xi}{2}u_{xx} + (1-\xi)\int_{-\infty}^{\infty}(u(t,y)-u(t,x))\frac{1}{\sqrt{2 \pi T^*}}\exp\left(-\frac{(x-y)^2}{2 T^*}\right)dy&=&0;\\
 u(0,\cdot)&=&g.
\eean
Corollary \ref{c:mixingPDE} yields the existence and uniqueness of a solution to the above for {\em any} $g \in L^2(m)$.
\end{example}
\begin{example} Let $K=\alpha U^{\alpha}$ for some $\alpha>0$, where $U^{\alpha}$ is the $\alpha$-potential operator. Observe that $\|K\|\leq 1$ so Corollary \ref{c:mixing} is applicable.  In this case the process $Y$ is given by $X$ with probability $1-\gamma$ while it is equal to a Markov jump process with generator $K-I$ with probability $\gamma$.
\end{example} 

\end{document}